\theoremstyle{plain}
\newtheorem{theorem}[subsubsection]{Theorem}
\newtheorem{proposition}[subsubsection]{Proposition}
\newtheorem{lemma}[subsubsection]{Lemma}
\newtheorem{corollary}[subsubsection]{Corollary}
\newtheorem*{theorem*}{Theorem}
\newtheorem*{thm*}{Theorem}
\newtheorem*{proposition*}{Proposition}
\newtheorem*{prop*}{Proposition}
\newtheorem*{lemma*}{Lemma}
\newtheorem*{lem*}{Lemma}
\newtheorem*{corollary*}{Corollary}
\newtheorem*{cor*}{Corollary}
\newtheorem*{sublemma*}{Sublemma}
\newtheorem*{sublem*}{Sublemma}
\newtheorem*{hypothesis*}{Hypothesis}
\newtheorem*{hyp*}{Hypothesis}
\newtheorem*{claim*}{Claim}
\newtheorem*{clm*}{Claim}
\theoremstyle{definition}
\newtheorem{definition}[subsubsection]{Definition}
\newtheorem*{definition*}{Definition}
\newtheorem*{defn*}{Definition}
\newtheorem*{setup*}{Set-up}
\theoremstyle{remark}
\newtheorem*{remark*}{Remark}
\newtheorem*{rem*}{Remark}
\newcommand{\spacex}{\xspace} %
\numberwithin{equation}{section}
\newcommand{\nc}{\newcommand}
\nc{\oname}{\operatorname}
\newcommand{\N}{\mathbf{N}}
\newcommand{\Q}{\mathbf{Q}}
\newcommand{\R}{\mathbf{R}}
\newcommand{\T}{\mathbf{T}}
\newcommand{\Z}{\mathbf{Z}}
\newcommand{\RZ}{\R / \Z }
\newcommand{\KR}{K\otimes _{\Q}\R}
\nc{\calO}{\mathcal O}
\newcommand{\ideala}{\mathfrak a}
\newcommand{\idealp}{\mathfrak p}
\newcommand{\idealq}{\mathfrak q}
\nc{\Nrm}{\mathbf N}
\nc{\OK}{\mathcal O _K}
\nc{\baci}{^{\times }} %
\nc{\Rpos}{\R\baci_{>0}} %
\nc{\pseudopolygrowth}[1]{e^{\sqrt{\log #1}/O(1)}} %
\nc{\abspseudopolygrowth}[1]{e^{\sqrt{\log #1}}} %
\nc{\Kpseudopolygrowth}[1]{e^{\sqrt{\log #1}/O_K(1)}} %
\nc{\pseudopolydecay}[1]{e^{-\sqrt{\log #1}/O(1)}} %
\nc{\abspseudopolydecay}[1]{e^{-\sqrt{\log #1}}} %
\nc{\Kpseudopolydecay}[1]{e^{-\sqrt{\log #1}/O_K(1)}} %
\nc{\totient}{\varphi _K} %
\nc{\vol}{\oname{Vol}}
\nc{\area}{\oname{Area}}
\nc{\ClK}{\oname{Cl}(K)} %
\nc{\Ideals}{\oname{Ideals}}
\nc{\IdealsK}{\Ideals_K}
\nc{\Primes}{\mathcal P} %
\nc{\Fou}{Fourier\spacex}
\nc{\Lip}{Lipschitz\spacex}
\nc{\Mit}{Mitsui\xspace}
\nc{\Mobius}{M{\"o}bius\spacex}
\nc{\Tera}{Ter{\"a}v{\"a}inen\spacex}
\nc{\Ideles}{\mathbf I_K} %
\nc{\idele}{id{\`e}le\spacex} %
\nc{\ideles}{id{\`e}les\spacex} %
\nc{\midele}{\mathrm{id\grave{e}le}} %
\nc{\diag}{\oname{diag}} %
\nc{\bdiag}{\oname{\mathbf{diag}}} %
\nc{\tors}{\mathrm{tors}} %
\nc{\add}{\mathrm{add}} %
\nc{\mult}{\mathrm{mult}} %
\nc{\Podual}{^\wedge} %
\nc{\podual}{^\wedge} %
\nc{\Hom}{\oname{Hom}}
\nc{\GL}{\oname{GL}}
\nc{\mtx}[1]{\begin{pmatrix}#1\end{pmatrix} }
\nc{\cube}{\square} %
\nc{\divides}{\mid} %
\nc{\notdivide}{\nmid} %
\nc{\notdiv}{\notdivide} %
\nc{\hatT}{\widehat\T} %
\nc{\directsum}{\bigoplus}
\nc{\dsum}{\directsum}
\nc{\disjointunion}{\bigsqcup} %
\nc{\dunion}{\disjointunion}
\nc{\surj}{\twoheadrightarrow} %
\nc{\inj}{\hookrightarrow} %
\nc{\surjfrom}{\twoheadleftarrow} %
\nc{\injfrom}{\hookleftarrow} %
\newcommand{\mapsfrom}{\mathrel{\reflectbox{\ensuremath{\mapsto}}}} %
\nc{\isoto}{\xrightarrow\cong}
\nc{\isofrom}{\xrightarrow[\cong]{}}
\nc{\actson}{\curvearrowright} %
\nc{\acts}{\actson}
\nc{\actedby}{\curvearrowleft} %
\nc{\acted}{\actedby}
\nc{\too}{\longrightarrow}
\nc{\xfrom}{\xleftarrow}
\DeclareMathOperator*{\residue}{res}
\nc{\ol}{\overline}
\nc{\inv}{^{-1}} %
\nc{\numberOfTerms}[2]{\underset{#2}{\underbrace{#1}}}
\newcommand{\lnorm}[1]{\left\Vert #1 \right\Vert} %
\title[Mitsui's PNT with Siegel zeros]{Notes on Mitsui's Prime Number Theorem with Siegel zeros}
\author{Wataru Kai}
\address{Mathematical Institute, Tohoku University, Aoba 6-3, Sendai 980-8578, Japan}
\email{kaiw@tohoku.ac.jp}
\keywords{prime elements, number fields, prime number theorem}
\subjclass{11R45, 11N32}
\newcommand{\MyAbstract}{In these notes,
we refine Mitsui's Prime Number Theorem from 1957, 
which for a number field $K$ predicts how many prime elements there are in bounded convex sets in $K \otimes _{\mathbf Q} \mathbf R $,
by incorporating potential Siegel zeros of Hecke $L$-functions.
This allows the norm of the modulus $\Nrm (\idealq  )$ 
to grow at a pseudopolynomial rate 
$\pseudopolygrowth{X}$
with respect to the size $X$ of the convex set
as opposed to powers of $\log X$.
The extra flexibility and precision will be essential in our future application to the study of linear patterns of prime elements.
We also hope that our updated exposition will make Mitsui's work accessible to a wider mathematical audience.}
\begin{document}

\begin{abstract}
    \MyAbstract
\end{abstract}

\maketitle

\setcounter{tocdepth}{1}
\tableofcontents

\section{Introduction}

\subsection{The Siegel--Walfisz prime number theorem}

Prime Number Theorem states in one of the several equivalent forms:
\begin{align}
    \sum _{0<p<X} \log (p) = X + o(X) \quad \text{ as }X\to +\infty .
\end{align}
Here the sum $\sum _{0<p<X}$ is the sum over prime numbers $p$ satisfying $0<p<X$.

A little more generally, the Siegel--Walfisz prime number theorem says for any modulus $1<q<\log ^A X$ ($A\ge 1$ is a prescribed constant)
and $a\in (\Z /q\Z )\baci$, we have the estimate 
\begin{align}\label{eq:Siegel-Walfisz}
    \sum _{\substack{
        0<p<X\\ 
        p\equiv a \ \mathrm{mod}\, q
        }}
    \log (p)
    = \frac{1}{\varphi (q)} X + O_A (Xe^{-\sqrt{\log X} /O_A(1) }).
\end{align}

Formulated this way, the restriction $q<\log ^{A} X$ 
cannot be improved by today's human technology %
because the contemporary analytic number theory has not been able to exclude the existence of notorious {\it Siegel zeros} of Dirichlet $L$-functions.
Siegel zeros are potential real zeros of Dirichlet $L$-functions slightly smaller than $1$, which contradicts the Grand Riemann Hypothesis.
This concept will be reviewed in \S \ref{sec:ideles-Hecke-L-Prime-ideal-th}.

Indeed, a Siegel zero $0<\beta <1$ modulo $q$ would give rise to a second main term $X^\beta / \beta \varphi (q) $ to \eqref{eq:Siegel-Walfisz}.
While this can be absorbed in the error term if we stay in the realm $q<\log ^{A} X$,
we have to take it seriously if we want to go beyond this bound.
Indeed, by incorporating the term $X^\beta / \beta \varphi (q) $ into the picture,
the modulus $q$ is allowed to be at least as large as $\pseudopolygrowth{X} $. 
Namely, for all $1\le q< \pseudopolygrowth{X} $ we have
\begin{align}
    \sum _{\substack{
        0<p<X\\ 
        p\equiv a \ \mathrm{mod}\, q
        }}
    \log (p)
    = \frac{1}{\varphi (q)} 
    \left( X 
    - \psi _{\mathrm{Siegel}}(a)\frac{X^\beta }{\beta } \right)
    + O (Xe^{-\sqrt{\log X} /O(1) }) 
\end{align}
where $\psi _{\mathrm{Siegel}}\colon (\Z /q\Z )\baci \to \{ \pm 1 \}$ is the potential Siegel character mod $q$.
See e.g.\ \cite[Theorem 5.27 on p.~122]{Iwaniec-Kowalski}.

\subsection{Mitsui's theorem and our main result}
The number field analogue of the Siegel--Walfisz theorem has already been documented.
It is known as Mitsui's prime number theorem, 
which can be formulated as follows: %

\begin{theorem}[{Mitsui \cite[Corollary and Theorem on p.~35]{Mitsui}, 1956}]
    \label{thm:Mitsui}
    Let $K$ be a number field of degree $n=[K:\Q ]= \dim _\Q K$.
    Let $A>1$ be a fixed positive number.
    Let $X>1$ and consider a convex open set $C\subset \KR $ contained in the region $(\KR )_{<X}$ of size $X$ (\S \ref{sec:the_domain}).
    Let $\idealq \subset \OK $ be a non-zero ideal such that 
    \begin{align}\label{eq:Mitsui-restriction}
        \Nrm (\idealq ) < \log ^A X 
    \end{align}
    and choose any $\alpha \in (\OK /\idealq )\baci$.
    Then we have 
    \begin{align}\label{eq:Mitsui-statement}
        \sum _{\substack{\pi \in C, \\ \text{\upshape prime element, }\\  \pi \equiv \alpha \text{\upshape\ in }\OK /\idealq } }
        \log |\Nrm (\pi )|
        =
        \frac{w_K }{\totient (\idealq )2^{r_1}\pi ^{r_2}h_K R_K}\vol (C)
        + O_{K,A}(X^n e^{-\sqrt{\log X}/O_{K,A}(1)} ).
    \end{align}
\end{theorem}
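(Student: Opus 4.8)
The strategy is to reduce the counting problem to a statement about prime \emph{ideals} in ideal classes weighted by grossencharacters, and then apply the prime ideal theorem for Hecke $L$-functions together with an explicit Fourier/harmonic analysis on the group of idele classes (or, in Mitsui's original language, on a suitably defined ``ray class group with an archimedean part''). First I would set up the relevant group: let $H_\idealq$ be the quotient of the group of fractional ideals prime to $\idealq$ by the subgroup of principal ideals $(\xi)$ with $\xi \equiv 1$ in $(\OK/\idealq)\baci$ and $\xi$ totally positive (or, more precisely, lying in a fixed fundamental domain for the unit action); the point is that the data ``$\pi \in C$, $\pi \equiv \alpha \bmod \idealq$'' can be repackaged, after dividing out by units, as ``the ideal $(\pi)$ lies in a prescribed coset and its set of generators meets $C$.'' The indicator of the archimedean condition ``some associate of $\pi$ lies in $C$'' is not a character, so it must be expanded: I would cover $C$ by pieces on which, modulo units, membership is detected by a smooth function on $(\KR)\baci / (\text{units})$, and expand that function in characters of the compact part, i.e.\ in Hecke grossencharacters $\chi$ of modulus $\idealq$.

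Concretely, the main identity will take the shape
\begin{align}
    \sum_{\substack{\pi \in C,\ \text{prime},\\ \pi \equiv \alpha \bmod \idealq}} \log|\Nrm(\pi)|
    = \frac{1}{\totient(\idealq)} \sum_{\chi \bmod \idealq} \ol{\chi}(\alpha)\, \widehat{f_C}(\chi)\, \psi(X;\chi) + (\text{error}),
\end{align}
where $\psi(X;\chi) = \sum_{\Nrm(\idealp) < X'} \chi(\idealp)\log\Nrm(\idealp)$ is the Chebyshev-type sum attached to the Hecke $L$-function $L(s,\chi)$, the coefficients $\widehat{f_C}(\chi)$ are Fourier coefficients of the (smoothed) indicator of $C$ along the archimedean directions, and $\totient(\idealq)$ absorbs the size of the relevant character group. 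The principal term comes from the trivial character, where $\psi(X;\chi_0) \sim X'$ by the prime ideal theorem and $\widehat{f_C}(\chi_0)$ reproduces $\vol(C)$ up to the stated constant $w_K/(2^{r_1}\pi^{r_2}h_K R_K)$; this constant is exactly the residue of the Dedekind zeta function divided by the order of the character group, so it will fall out of the normalization. For every nontrivial $\chi$ one invokes a zero-free region for $L(s,\chi)$ of Siegel--Walfisz quality: since $\Nrm(\idealq) < \log^A X$ and the analytic conductor of $\chi$ is controlled by $\Nrm(\idealq)$ times a power of the archimedean frequency, a standard zero-free region (with Siegel's theorem handling the possible exceptional real zero for real $\chi$, which in the range $\Nrm(\idealq) < \log^A X$ contributes only to the error term) gives $\psi(X;\chi) \ll X' e^{-\sqrt{\log X}/O_{K,A}(1)}$; one then sums over $\chi$, using rapid decay of $\widehat{f_C}(\chi)$ in the archimedean frequency to make the sum converge with an acceptable loss.

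The main obstacle, and the part that requires genuine care rather than citation, is the \emph{geometric} step: controlling the archimedean Fourier expansion of the indicator of the convex set $C$ and the attendant error from smoothing. Because $C$ is merely convex (not smooth), one must approximate $\mathbf 1_C$ above and below by smooth functions whose Fourier coefficients decay fast enough, while the smoothing width is small enough that the difference in volume is $o(\vol(C))$ and, crucially, the difference in the number of lattice-type points (prime elements) is within the target error $X^n e^{-\sqrt{\log X}/O_{K,A}(1)}$. This is where the size constraint on $C$ (namely $C \subset (\KR)_{<X}$, so $\vol(C) \ll X^n$) and a Lipschitz-boundary/covering argument for convex bodies enter; handling the interaction of this smoothing with the unit lattice (so that a fundamental domain for units, intersected with $C$, still has controlled boundary) is the technical heart. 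I would carry out this geometric reduction first, then the character expansion, then quote the prime ideal theorem with zero-free region and Siegel's bound, and finally reassemble, checking that the leading constant matches by comparing with the class number formula.
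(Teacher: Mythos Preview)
Your plan is correct and matches the paper's approach almost exactly: expand the indicator of the archimedean condition in Hecke characters on the compact part $C(\idealq)/\Rpos$ of the id\`ele class group, feed each character into the prime ideal theorem with a Siegel--Walfisz--type zero-free region, and handle the convex body by a geometric smoothing/covering argument; in the range $\Nrm(\idealq)<\log^A X$ you correctly note that Siegel's ineffective bound absorbs any exceptional zero into the error. The only implementation difference is that the paper carries out the geometric step in two stages---first approximating $C$ by ``annulus sectors'' (products of archimedean shells), then each sector by products $[N',N]\cdot P$ of a norm interval and a small cube $P\subset (\KR)\baci/\Rpos$---rather than a single smoothing of $\mathbf 1_C$, but this is a matter of bookkeeping, not of strategy.
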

Here, there is a canonical $\R $-linear isomorphism $\KR \cong \R ^{n}$
and $\vol (C) $ refers to the volume in the Lebesgue measure $\mu $ on $\R ^n$.

The main purpose of this text is 
to improve Mitsui's theorem 
by incorporating the secondary main term coming from potential Siegel zeros $\beta $
of $L$-functions $L(\psi ,s)$ associated with mod $\idealq  $ characters $\psi $.
To be specific, let $\psi _{\mathrm{Siegel}}$ be the potential Siegel character mod $\idealq  $ and $\beta $ the Siegel zero (see \S \ref{sec:L-function} for these notions). 
In our main Theorem \ref{thm:main-theorem},
the right hand side of \eqref{eq:Mitsui-statement} will be refined to the expression
\begin{multline}
    \frac{w_K}{\totient (\idealq  )2^{r_1}\pi^{r_2}h_KR_K}
    \int _C 1-\psi _{\mathrm{Siegel}} (-;\alpha ) N(-)^{\beta -1} d\mu 
    \\ + 
    O_K (X^n \Kpseudopolydecay{X} ).
\end{multline}
This modification allows us to relax the restrction \eqref{eq:Mitsui-restriction} to a pseudopolynomial growth 
\begin{align}\label{eq:we-can-allow-pseudo-poly-growth}
    \Nrm (\idealq ) < \Kpseudopolygrowth{X} .
\end{align}

This quantitative improvement will be essential in our application in the forthcoming work,\footnote{Now available at \url{https://arxiv.org/abs/2306.16983}}
where 
based on the author's former joint work \cite{KMMSY}
we extend results of Green--Tao--Ziegler \cite{LinearEquations, GowersInverse} and Tao--\Tera \cite{TaoTeravainen}
on linear patterns of prime numbers to general number fields. 
To illustrate what it is about, consider polynomials of degree $1$:
\begin{align}\label{eq:illustration}
    ax + by +c \in \OK [x,y],
\end{align}
where the coefficients $(a,b,c)$ vary in a given finite subset $T\subset \OK ^3$.
Under some conditions on $T$ including the hypothesis that the vectors $(a,b)\in K^2$ are pairwise linearly independent, we will be able to conclude that for any large enough convex body 
$C\subset (\KR )^2$, there are points $(x,y)\in C \cap \OK ^2$
such that the values of \eqref{eq:illustration} are prime elements of $\OK $ simultaneously for all $(a,b,c)\in T$.

This will furthermore be applied to a Hasse principle type problem on $K$-rational points on certain specific type of varieties as was done in \cite{Harpaz-Skorobogatov-Wittenberg} by Harpaz--Skorobogatov--Wittenberg over $K=\Q $.

\subsection{Remarks on the formulation}
In Mitsui's original account \cite[Theorem on p.~35]{Mitsui}, the theorem is formulated for sets $C$ of the following form
\begin{multline}
    C= (\KR )_{<X_1,\dots ,X_{r_1+r_2}}
    \\
    :=
    \{ x\in \KR \mid 
    |\sigma _i(x)|<X_i \  1\le \forall i \le r_1+r_2 \} ,
\end{multline}
where $X_i$ are positive numbers
satisfying $X_i\le X_j^a$ for all $i,j$ for a prescribed constant $a\ge 1$.
Also, the error term in his formulation is
\[ O_{K,A,a}(N e^{-\sqrt{\log N } /O_{K,A,a}(1) } ), \]
where $N$ is the supremum of norms in the region $(\KR )_{<X_1,\dots ,X_{r_1+r_2}}$.
His arguments can easily be adapted to give Theorem \ref{thm:Mitsui}.
It is not clear 
if the theorem stays true for general convex bodies $C$ in $(\KR )_{<X_1,\dots ,X_{r_1+r_2}}$.

We formulated our Theorem \ref{thm:main-theorem} for convex bodies $C$ in $ (\KR)_{<X}$
in parallel with Theorem \ref{thm:Mitsui} because our intended application requires this case. 
It can also be shown for $C= (\KR )_{<X_1,\dots ,X_{r_1+r_2}}$ with essentially the same proof but we do not present the argument specifically needed for this case.\footnote{Here is a hint for the interested reader. Note that $(\KR )_{<X_1,\dots ,X_{r_1+r_2}}$ is an annulus sector in the sense of Definition \ref{def:annulus-sector}.
Its logarithmic image in $\R^{r_1+r_2}$ is a cuboid in which the ratios of the side lengths are bounded by $a$. Then one runs the process of \S \ref{sec:end-of-proof}.}

In addition to prime elements, his theorem in \cite{Mitsui} contains statements about how many {\it prime ideal numbers} there are---%
a concept which has something to do with the fact that the number ring $\OK $ is not necessarily a unique factorization domain (UFD)---in the convex body $C$.
Our result will also be shown in this generality, replacing {\it prime ideal numbers} by {\it prime elements in ideals} (\S \ref{sec:number_rings}).
This generality will be useful for example when one wants to find linear patterns of points in $\Z ^2$
at which a given quadratic form $ax^2+bxy+cy^2$ takes primes values; see \cite[\S 10]{KMMSY}.

\subsection{Outline of the method}

The method is largely the same as Mitsui's in \cite{Mitsui}.
On top of the consideration of potential Siegel zeros, there are two expositional improvements which we hope are making this text more accessible to the modern reader.
One is that we got rid of the Byzantine notion of {\it ideal numbers} (see e.g.\ 
\cite[p.~486]{Neukirch} for this notion)
and used exclusively the usual notion of {\it ideals} to formulate and prove the results.
The other is that Mitsui's highly unpenetrable seemingly {\it ad hoc} computations and estimates %
has mostly been replaced by general discussions of \ideles and Fourier analysis.\footnote{Actually these two aspects have been preventing the author from being able to read through his paper to this day.}

Now we outline Mitsui's \cite{Mitsui} method. The case of quadratic fields is also sketched in references like \cite[p.~130]{Iwaniec-Kowalski} \cite[p.~318]{LangAlgNumTheory}.\footnote{
    By the way it's a pitty that they don't mention Mitsui's work even though he pretty much predates these books.}
Let us assume that {\it there is no Siegel zeros} 
because %
we want to focus on the main ideas here.

First, from a known zero-free region of Hecke $L$-functions, we have the following estimate of the weighted sum of prime {\it ideals} (Theorem \ref{thm:prime-ideal-theorem}). Below, $\psi $ is an arbitrary mod $\idealq  $ Hecke character\footnote{
    Let us not forget that we are assuming there is no Siegel zero. Also, in the main body of the text $\psi (\idealp)$ is written $\psi ([\idealp ])$ for good reason.}
\begin{align}\label{eq:intro-prime-ideal-theorem}
    \sum _{\substack{
        \idealp \\ 
        \text{prime to }\idealq   \\ 
        \Nrm(\idealp )<N
        } } 
        \psi (\idealp )\ \log \Nrm (\idealp )
    =
    \begin{cases}
        N  + O_K(N \Kpseudopolydecay{N}) &\text{ if }\psi \equiv 1,
        \\[3mm]
        O_K(N \Kpseudopolydecay{N}) &\text{ otherwise}.
    \end{cases} 
\end{align}

For simplicity let us further assume {\it $\OK $ is a UFD}.
If we choose an $\OK\baci $-fundamental domain $\mathcal D \subset \OK\setminus \{ 0\}  $, 
prime ideals (now all principal) correspond to unique prime elements in $\mathcal D  $.
Thus the left hand side of \eqref{eq:intro-prime-ideal-theorem} becomes 
a sum 
over prime elements $\pi \in \mathcal D  $ with $\Nrm (\pi )<N$.

\begin{figure}
    \begin{tikzpicture}[scale=0.35]

    \filldraw[lightgray!40] (0,0)
    [domain=-7:7, samples=40, variable=\y] -- plot( { sqrt((50 +((\y)^2) ) ) },\y )
    --cycle; 

    \draw[->] (-7,0)--(10,0); 
    \node at (12,0) {$\Q $};
    \draw[->] (0,-7)--(0,10); 
    \node at (0,12) {$\Q \sqrt{2} $};

    \draw[dashed, ->] (-8,-8)--(11,11);
    \draw[dashed, ->] (-8.5,8.5)--(10,-10);

    \draw (0,0)--(11,{11/(sqrt(2))});
    \draw[fill] ({sqrt(10)*sqrt(2)},{sqrt(10)*1}) circle (0.08);

    \draw (0,0)--(11,{-11/(sqrt(2))});
    \draw[fill] ({sqrt(10)*sqrt(2)},{-sqrt(10)*1}) circle (0.08);

    \node at (8.5,-2) {$\mathcal D $};

    \draw[domain=-9:10, samples=40, variable=\y] plot( { sqrt((10 +((\y)^2) ) ) },\y );
    \draw[domain=-9:9, samples=40, variable=\y] plot( { sqrt((50 +((\y)^2) ) ) },\y );
    
    \draw[domain=-7:7, samples=40, variable=\y] plot( {-sqrt((10 +((\y)^2) ) ) },\y );
    
    \draw[domain=-7:10, samples=40, variable=\x] plot( \x ,{ sqrt((10 +((\x)^2) ) ) } );
    \draw[domain=-7:9, samples=40, variable=\x] plot( \x ,{-sqrt((10 +((\x)^2) ) ) } );



\end{tikzpicture}
    \caption{\small A fundamental domain for $K=\Q (\sqrt{2})$ truncated at some norm.
    The scale is adjusted so that the point $x+y\sqrt{2}\in \Q (\sqrt{2})$ looks like $(x,y\sqrt{2})\in \R ^2$.
    The norm is $\Nrm (x+y\sqrt{2})=\left| x^2-(y\sqrt{2})^2 \right| $.
    The hyperbolas pass through points of the same norm.
    In this example we know $(\KR )/\OK\baci \cong \Rpos \times (\RZ )$.
    }\label{fig:fundamental-domain}
\end{figure}

Though not strictly needed, there is a standard, geometric way of choosing $\mathcal D  $.
For example when $K=\Q (\sqrt{2})$, an instance of $\mathcal D  $ is shown in Figure \ref{fig:fundamental-domain}.

Mod $\idealq  $ Hecke characters are %
characters %
of the mod $\idealq $ \idele class group 
$C(\idealq  )$,
which is a commutative Lie group. 
By modding out a subgroup $\cong \Rpos \cong \R $, we obtain a {\em compact} commutative Lie group $C(\idealq  )/\Rpos $.
It is a finite cover of another group
\begin{align}
    C(\idealq  )/\Rpos \surj (\KR )/(\OK\baci \cdot \Rpos ) ,
\end{align}
where the kernel is $(\OK/\idealq  )\baci $.
Fourier analysis (\S \ref{sec:Fourier}) can be applied to this torus. 
This allows us to count prime elements in a specified mod $\idealq  $ class and inside thin cones
as in Figure \ref{fig:thin-cone}.
Such cones are written as $\Rpos P$ in the main body of the text.
Thus we will obtain:\footnote{
    Let us repeat that we are assuming that there is no Siegel zero.}
\begin{align}\label{eq:intro-prime-elements-thin-cones}
    \sum _{\substack{
        \pi \text{ prime element}\\ \pi \equiv \alpha \ \mathrm{mod}\, \idealq  \\ \pi \in \Rpos P \\ |\Nrm (\pi )|<N
    }}
    \log |\Nrm (\pi )|
    =
    \frac{\text{const.}}{\totient (\idealq  )}\ N + O_K(N \Kpseudopolydecay{N}),
\end{align}
where the constant depends only on $K$ and $\Rpos P$, and is proportional to the volume of the set 
$\Rpos P \cap \{ x\in \KR \mid |\Nrm (x)|<1 \} $.
See Corollary \ref{cor:prime-elements-between-N-and-N'}.

\begin{figure}
    \begin{tikzpicture}[scale=0.35]

\filldraw[lightgray!40] (0,0)
[domain=-7:7, samples=40, variable=\y] -- plot( { sqrt((50 +((\y)^2) ) ) },\y )
--cycle; 

    \filldraw[darkgray!50] 
    (0,0)
    [domain=3:4, samples=40, variable=\y] -- plot( { sqrt((50 +((\y)^2) ) ) },\y )
    --cycle; 

    \draw[->] (-7,0)--(10,0); 
    \node at (12,0) {$\Q $};
    \draw[->] (0,-7)--(0,10); 
    \node at (0,12) {$\Q \sqrt{2} $};

    \draw[dashed, ->] (-8,-8)--(11,11);
    \draw[dashed, ->] (-8.5,8.5)--(10,-10);

    \draw (0,0)--(11,{11/(sqrt(2))});
    \draw[fill] ({sqrt(10)*sqrt(2)},{sqrt(10)*1}) circle (0.08);

    \draw (0,0)--(11,{-11/(sqrt(2))});
    \draw[fill] ({sqrt(10)*sqrt(2)},{-sqrt(10)*1}) circle (0.08);

        \node at (8.5,-2) {$\mathcal D $};

    \draw (0,0)--(10,5);
    \draw (0,0)--(10,3.9);
    \node at (12,4.5) {$\Rpos P$};

    \draw[domain=-9:10, samples=40, variable=\y] plot( { sqrt((10 +((\y)^2) ) ) },\y );
    \draw[domain=-9:9, samples=40, variable=\y] plot( { sqrt((50 +((\y)^2) ) ) },\y );
    
    \draw[domain=-7:7, samples=40, variable=\y] plot( {-sqrt((10 +((\y)^2) ) ) },\y );
    
    \draw[domain=-7:10, samples=40, variable=\x] plot( \x ,{ sqrt((10 +((\x)^2) ) ) } );
    \draw[domain=-7:9, samples=40, variable=\x] plot( \x ,{-sqrt((10 +((\x)^2) ) ) } );



\end{tikzpicture}
    \caption{\small Fourier analysis allows us to focus on prime elements inside thin cones like this (colored dark gray).}\label{fig:thin-cone}
\end{figure}

In \S \ref{sec:end-of-proof}, we will use \eqref{eq:intro-prime-elements-thin-cones} to count prime elements in a given convex set $C$ by approximating it with segments of thin cones (Figure \ref{fig:convex}).
This approximation can be done within the claimed error.

Management of the error, notably its independence from $\idealq  $,
requires 
somewhat careful and boring analysis, 
which is done in Appendices \ref{sec:Dirichlet}--\ref{sec:Euclid}.

\begin{figure}
    \begin{tikzpicture}[scale=0.45]

    \filldraw[darkgray!50] 
    (4.72,2.36)
    [domain=2.36:1.9, samples=40, variable=\y] -- plot( { sqrt((16.71 +((\y)^2) ) ) },\y )
    [domain=4.4:5.48, samples=40, variable=\y] -- plot( { sqrt((90 +((\y)^2) ) ) },\y )
    --cycle; 

    \filldraw[darkgray!80] 
    (5.05,{sqrt(5.05^2-21)})
    [domain=2.36:1.62, samples=40, variable=\y] -- plot( { sqrt((21 +((\y)^2) ) ) },\y )
    [domain=3.4:4.4, samples=40, variable=\y] -- plot( { sqrt((92 +((\y)^2) ) ) },\y )
    --cycle; 

    \draw[domain=-3:9, samples=40, variable=\y] plot( { sqrt((90 +((\y)^2) ) ) },\y );
    \draw[domain=-3:9, samples=40, variable=\y] plot( { sqrt((16.71 +((\y)^2) ) ) },\y );

    \draw[domain=2:5, samples=40, variable=\y] plot( { sqrt((92 +((\y)^2) ) ) },\y );
    \draw[domain=1:4, samples=40, variable=\y] plot( { sqrt((21 +((\y)^2) ) ) },\y );

    \node at (7,10) {$C$};
    \draw (7,5) circle (3.5);

    \draw[->] (-3,0)--(11.5,0); 
    \node at (12,0) {$\Q $};
    \draw[->] (0,-3)--(0,10); 
    \node at (0,12) {$\Q \sqrt{2} $};

    \draw[dashed, ->] (-3,-3)--(11,11);
    \draw[dashed] (-3,3)--(3,-3);

    \draw (0,0)--(13,7.8);
    \draw (0,0)--(12,6);
    \draw (0,0)--(12,5.04);
    \draw (0,0)--(12,4);
    \draw (0,0)--(12,3);



\end{tikzpicture}
    \caption{\small Approximation of a convex body $C$ by segments of thin cones}\label{fig:convex}
\end{figure}

\subsection{General notation and terminology}\label{sec:notation}

\subsubsection{Multiplicative groups of complex numbers}\label{sec:multiplicative-groups}

Here are some subgroups of $\mathbf{C}\baci $ that we frequently use in this paper.

\begin{itemize}
\item 
We write $\Rpos $ for the multiplicative group of positive real numbers. We know $\Rpos \cong \R $ via the logarithm.

\item 
We write $S^1\subset \mathbf{C} \baci$ for the subgroup of complex numbers of absolute value $1$.
It is canonically isomorphic to $\R /\Z $ by the map 
$\theta \ (\mathrm{mod}\, \Z )\mapsto e^{2\pi i \theta }$.

\item 
We know 
\begin{align}
&\R\baci\cong \Rpos \times \{ \pm 1\}\cong \R \times \{\pm 1\}  \text{ and }
\\ 
&\mathbf{C}\baci\cong \Rpos \times S^1\cong \R \times (\R /\Z ),
\end{align}
where we choose the projection $\mathbf{C}\baci \to \Rpos $ to be $z\mapsto |z|^2$.
\end{itemize}

\subsubsection{Number rings}\label{sec:number_rings}
We write $\IdealsK $ for the multiplicative monoid of non-zero ideals of $\OK $.
We write $h_K:= \# \oname{Cl} (K)$ for the class number.
For each class $\lambda \in \oname{Cl} (K)$, we fix a representing fractional ideal $\ideala _\lambda \subset K$.
For the trivial class $0\in \oname{Cl}(K)$, let us choose $\ideala _{0}:= \OK $.
Let $\ideala_\lambda\inv =\{ \alpha \in K\mid \alpha \ideala_\lambda \subset \OK \}$ be the inverse fractional ideal.
We have a bijection
\begin{align}\label{eq:parametrization-of-ideals}
    \Bigl\{ (0)\neq \ideala\subset \OK \text{ ideals }\Bigm| %
    [\ideala ]= \lambda \text{ in }\ClK  \Bigr\}
    &\ \cong \ 
    (\ideala_\lambda\inv \setminus \{ 0\}  ) / \OK \baci 
    \\ 
    \ideala = a\ideala _\lambda 
    &\ \mapsfrom\ a .
\end{align}

It follows that 
if we are given an $\OK\baci$-fundamental domain $\mathcal D  $ for $(\KR )\baci $,
there is a bijection 
\begin{align}\label{eq:parametrization}
    \IdealsK \cong \bigsqcup _{\lambda \in \ClK } \ideala _\lambda\inv \cap \mathcal D  .
\end{align}

Let us call an element $\pi \in \ideala $ of a non-zero fractional ideal a {\it prime element} of $\ideala $ if $\pi \ideala \inv $ is a non-zero prime ideal (where $\ideala\inv $ is the inverse fractional ideal of $\ideala $).\footnote{
    An algebro-geometric justification for this piece of terminology is that for any integral scheme $X$ and a line bundle $\mathcal L $ on it, we could call a non-zero section $s\colon \calO _X \to \mathcal L $ {\it prime} if its zero locus $(s)\subset X$ is an integral scheme.} %
Denote by $\Primes (\ideala )$ the set of prime elements of $\ideala $.
In this paper, sums and products $\bigoplus _{\idealp }, \sum _{\idealp }, \prod _{\idealp}$ indexed by the letter $\idealp $ will always mean those indexed by non-zero prime ideals, often subject to some additional conditions.
Likewise, sums and products $\bigoplus _{\pi }, \sum _{\pi }, \prod _{\pi}$ indexed by $\pi $ will mean those indexed by prime elements.

Euler's totient function $\totient $ for $K$ is defined as the function 
$\ideala \mapsto \# (\OK / \ideala )\baci $,
$\IdealsK \to \N $.

\subsubsection{Landau's $O(-)$ symbol}\label{sec:Landau's-symbol}
The symbol $O_{a,b,c}(1)$ denotes a complex value whose absolute value is bounded from above by a constant $C_{a,b,c}>0$ depending only on the parameters $a,b,c$.
The specific value can be different from occasion to occasion.
In particular, an $O(1)$ means a complex value whose absolute value is bounded by an absolute constant independent of any parameter even if some parameters are at play.

When $f(t)$ is a positively valued function of $t$,
the expression $O_{a,b,c}(f(t))$ means $O_{a,b,c}(1)\cdot f(t)$. Often $t$ will be a positive real parameter, and the bound may be valid only when $t$ is large enough.
We allow this threshhold to depend on the parameters $a,b,c$.
We shall indicate it if the threshhold depends on more (or less) parameters.

We make an exception of allowing ourselves to use $O_K(-)$ even when the bounds depend on additional choices related to $K$,
as long as those choices are independent of the problem at hand and can be made once the number field $K$ is given.
Such choices include 
an isomoprhism $\OK \baci \cong (\Z / w\Z) \times \Z ^{r}$ and representatives $\ideala _\lambda $ ($\lambda \in \ClK $) for the ideal class group.

The expression $f(t)\ll _{a,b,c} g(t)$ means 
$f(t)=O_{a,b,c}(g(t))$.
The expression $f(t)\asymp _{a,b,c} g(t)$ means that $f(t)\ll _{a,b,c} g(t)$ and $g(t)\ll _{a,b,c}f(t)$ both hold.

It goes without saying that a statement of the form 
\begin{itemize}
    \item If $A=B/O_{a,b,c}(1)$, then $X=O_{d,e,f}(Y)$, or 
    \item 
    If $A\ll _{a,b,c}B$, then $X\ll _{d,e,f} Y$
\end{itemize}
should be read as the statement that there are large enough positive constants $C_{a,b,c}>1$ and $C_{d,e,f}>1$
depending only on the parameters indicated in subscript such that the following holds:
\begin{quotation}
    If $|A|<B/ C_{a,b,c}$, then $|X|<C_{d,e,f}Y$.
\end{quotation}
Observe that the larger the constants $C_{a,b,c},C_{d,e,f}$ are, the more restrictive the hypothesis is and the weaker the conclusion. So the statement gets more likely to hold.

\subsubsection{Usage of some letters}

The letter $X$ is often used to indicate the length which characterizes the region in $\KR $ in which we search for prime elements.

The letter $N$ is used to bound the norm of elements. It is often set to be $N:=X^n$,
where $n=[K:\Q ]$.

The letters $M,Y>1$ will be used to denote generic positive parameters which grow at a pseudopolynomial rate $e^{\sqrt{\log N} / O_K (1)} $.
The specific value of $O_K(1)$ may vary from occasion to occasion.

\subsection*{Acknowledgments}
I thank Federico Binda, Hiroyasu Miyazaki and Rin Sugiyama for fruitful on-line and in-person discussions as well as feedback to earlier versions of this paper. 
I was supported by Japan Society for the Promotion of Science (JSPS) through 
JSPS Grant-in-Aid for Young Scientists (numbers JP18K13382 and JP22K13886).
This work was done while I was staying at the University of Milan
as a JSPS Overseas Research Fellow. 
I thank my colleagues there for the pleasant working environment.

\section{Recollection of Prime Ideal Theorem}\label{sec:ideles-Hecke-L-Prime-ideal-th}

Let us recall the notion of Hecke characters, the associated $L$-functions,
their known zero-free regions
and the Prime Ideal Theorem \ref{thm:prime-ideal-theorem} that results.

\subsection{Hecke characters}\label{sec:Hecke}
We denote the group of \ideles by $\Ideles $.
A {\it Hecke character} is a continuous homomorphism 
\begin{align}
    \psi \colon \Ideles /K\baci \to S^1 .
\end{align}
In the literature it is also called a {\it Gr{\"o}{\ss}encharakter}.
Every such $\psi $ factors through a quotient of the following form for some non-zero ideal $\idealq = \prod _{\idealp \text{ finitely many}} \idealp ^{n_{\idealp }}$:
\begin{align}
    \psi \colon \Ideles /K\baci 
    &\surj 
    \left( (\KR )\baci \times 
    \bigoplus _{\idealp | \idealq }
        K\baci / (1+\idealp ^{n_{\idealp}} \calO _{K,\idealp } )
    \times 
    \bigoplus _{\idealp \notdiv \idealq }
        \Z 
    \right) /K\baci
    \\ 
    &\too S^1 .
\end{align}
In this case we say $\psi $ is a {\it mod $\idealq $ Hecke character}.
It is called a {\it primitive} mod $\idealq $ Hecke character if it is not a mod $\idealq '$ Hecke character for any other divisor $\idealq ' | \idealq $, $\idealq ' \neq \idealq $. 
In this case $\idealq $ is called the {\it conductor} of $\psi $.
See \cite[XV-\S 3]{Cassels-Froehlich},
\cite[VII-\S 3 and XVI-\S 7]{LangAlgNumTheory} 
or 
\cite[VI-\S 1 and VII-\S 6]{Neukirch} for more background.

Let us write this quotient (the so-called \idele class group mod $\idealq $) in the middle as 
\begin{align}
    C(\idealq ) = \widetilde C(\idealq ) /K\baci 
    := 
    \left( (\KR )\baci \times 
    \bigoplus _{\idealp | \idealq }
        K\baci / (1+\idealp ^{n_{\idealp}} \calO _{K,\idealp } )
    \times 
    \bigoplus _{\idealp \notdiv \idealq }
        \Z 
    \middle) 
    \right/ K\baci
    .
\end{align}

\subsection{The $L$-function}\label{sec:L-function}
Let $\psi $ be a Hecke character with conductor $\idealq $.
For prime ideals $\idealp \notdivide \idealq $, let $[\idealp ]\in \widetilde C(\idealq ) $ be the element 
whose $\idealp $-component is $1$ and the other components are trivial.
Sometimes the symbol $[\idealp ]$ is used to mean its image to $ \bigoplus _{\idealp' \notdivide \idealq  }\Z $ as well.
Multiplicatively we can define $[\ideala ]\in \widetilde C(\idealq )$ or $\bigoplus _{\idealp \notdivide \idealq  }\Z $ for every ideal $\ideala $ prime to $\idealq $.

The {\it Hecke $L$-funtion} associated to $\psi $ is defined by (recall $\idealq  $ is the conductor)
\begin{align}
    L(s,\psi )
    &:= \sum _{\ideala \text{ ideal prime to }\idealq }
    \frac{\psi ([\ideala ])}{\Nrm (\ideala )^s}
    \\ 
    &= \prod _{\idealp \notdivide \idealq }
    \frac{1}{1-\frac{\psi ([\idealp ])}{\Nrm (\idealp )^s}}
\end{align}
for $\Re (s)>1$.
$L(s,\psi )$ is known to be a meromorphic function on $\mathbf{C} $ (see e.g.\ \cite[Main Theorem 4.4.1 and p.~346]{Cassels-Froehlich} \cite[Theorem 12 on p.~295 and p.~299]{LangAlgNumTheory} or \cite[VII-\S 8 (8.6)]{Neukirch}).

\subsection{The zero-free region and prime ideal theorem}

A classical zero-free region is known:
\begin{theorem}[{zero-free region}]
    \label{thm:zero-free-region}
    There is a positive constant $0<c_K<1$ %
    such that the following holds for every non-zero ideal $\idealq  \subset \OK $. 
    \begin{enumerate}
        \item 
        For all but possibly one mod $\idealq $ Hecke character $\psi $ (not necessarily primitive), the $L$-function $L(s,\psi )$ does not have zeros or poles in the region 
        (where $s=\sigma +it$ as usual)
        \begin{align}
            \sigma > 1- \frac{c_K}{\log ( \Nrm(\idealq ) (|t|+4) )} .
        \end{align} 
        \item 
        The potential exceptional $\psi $, if it exists, has to be {\it real}
        (i.e., takes values in $\{ -1,+1 \}$). 
        This $\psi $ is called the {\em Siegel} (or {\em exceptional}) character mod $\idealq $ and denoted by $\psi _{\mathrm{Siegel}}$. 
        \item 
        The $L$-function $L(\psi _{\mathrm{Siegel}},s)$ can have at most one zero $\beta $ and the zero has to be real:
        \begin{align}
            1- \frac{c_K}{\log ( 4\Nrm(\idealq )  )} < \beta <1 .
        \end{align}
        The zero $\beta $ is called the {\em Siegel} (or {\em exceptional}) zero mod $\idealq $.
    \end{enumerate}
\end{theorem}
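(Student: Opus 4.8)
The plan is to carry out the classical de~la~Vall\'ee~Poussin argument for Hecke $L$-functions, keeping every constant dependent only on $K$ and never on the modulus $\idealq$. I would first reduce to primitive characters: if $\psi$ mod $\idealq$ is induced by the primitive $\psi^\ast$ of conductor $\idealq^\ast\mid\idealq$, then $L(s,\psi)$ and $L(s,\psi^\ast)$ differ by finitely many Euler factors $1-\psi^\ast([\idealp])\Nrm(\idealp)^{-s}$, which are holomorphic and non-vanishing in $\Re(s)>0$, while $\Nrm(\idealq^\ast)\le\Nrm(\idealq)$; so a zero-free region for $L(s,\psi^\ast)$ in terms of $\Nrm(\idealq^\ast)$ gives one for $L(s,\psi)$ in terms of $\Nrm(\idealq)$. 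For primitive $\psi$ I would invoke the standard analytic inputs (see e.g.\ \cite{Iwaniec-Kowalski,LangAlgNumTheory,Neukirch}): $L(s,\psi)$ continues to an entire function unless $\psi$ is trivial, in which case $L(s,\psi)=\zeta_K(s)$ has a single simple pole at $s=1$ (for $\psi=\psi_0$ this is the standard pole tacitly excluded in assertion~(1)); $L(s,\psi)$ satisfies a functional equation whose gamma- and conductor-factors are governed by $|d_K|$, the archimedean type of $\psi$, and $\Nrm(\idealq)$; and it is of finite order, hence has a Hadamard product over its non-trivial zeros $\rho$. Substituting the product into the logarithmic derivative and estimating the archimedean part yields, for $1<\sigma\le2$ and all real $t$,
\[
-\Re\frac{L'}{L}(\sigma+it,\psi)\ \le\ A_K\log\!\bigl(\Nrm(\idealq)(|t|+4)\bigr)\ -\ \sum_{\rho}\Re\frac{1}{\sigma+it-\rho},
\]
every summand being positive, together with the companion bound $-\frac{L'}{L}(\sigma,\psi_0)\le(\sigma-1)^{-1}+A_K\log\Nrm(\idealq)$ for the principal character $\psi_0$ mod $\idealq$, coming from the pole of $\zeta_K$.

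Next I would invoke the elementary inequality $3+4\cos\theta+\cos2\theta=2(1+\cos\theta)^2\ge0$. Through the Dirichlet-series expansion $-\frac{L'}{L}(s,\psi)=\sum_{\idealp,k}\psi([\idealp^k])\log\Nrm(\idealp)\,\Nrm(\idealp)^{-ks}$ it gives, for $\sigma>1$,
\[
3\Bigl(-\tfrac{L'}{L}(\sigma,\psi_0)\Bigr)+4\,\Re\Bigl(-\tfrac{L'}{L}(\sigma+it,\psi)\Bigr)+\Re\Bigl(-\tfrac{L'}{L}(\sigma+2it,\psi^2)\Bigr)\ \ge\ 0.
\]
If $L(s,\psi)$ has a zero $\rho_0=\beta+i\gamma$, I would apply this at $s=\sigma+i\gamma$, bounding the first term by the pole estimate, keeping only the $\rho_0$-term in the second, and discarding all zeros in the third; provided $\psi^2\neq\psi_0$ — so that $L(s,\psi^2)$ is regular at $\sigma$ — the choice $\sigma=1+\delta/\log(\Nrm(\idealq)(|\gamma|+4))$ with $\delta$ a small absolute constant and $c_K$ chosen small in terms of $A_K$ and $\delta$ forces $\beta\le1-c_K/\log(\Nrm(\idealq)(|\gamma|+4))$. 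Since $\psi^2=\psi_0$ precisely when $\psi$ is real, this already proves assertion~(2) and proves~(1) for every non-real $\psi$; for real $\psi$ it proves~(1) away from a bounded neighbourhood of $s=1$ (where $\sigma+2i\gamma$ collides with the pole of $L(s,\psi_0)$), leaving open only the behaviour of real characters near $s=1$.

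For that range I would use non-negativity of Dirichlet coefficients: for real $\psi$, $1+\psi([\idealp^k])\ge0$, hence $-\frac{L'}{L}(\sigma,\psi_0)-\frac{L'}{L}(\sigma,\psi)\ge0$ for $\sigma>1$. Keeping one hypothetical zero and letting $\sigma\to1^+$ shows $L(1+it,\psi)\neq0$ for all $t$ and all $\psi$ (non-vanishing on $\Re(s)=1$; in particular $\beta<1$); keeping two hypothetical real zeros $\beta_1,\beta_2$ and taking $\sigma=1+\delta'/\log(4\Nrm(\idealq))$ shows at most one exceeds $1-c_K/\log(4\Nrm(\idealq))$; and keeping a conjugate pair $\beta\pm i\gamma$ (both zeros, $\psi$ being real) in the $3$-$4$-$1$ inequality forces a zero near $s=1$ to have $\gamma=0$, i.e.\ to be real. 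Finally, that for a fixed $\idealq$ at most one character is exceptional I would obtain from the classical Landau/Page argument applied to $\zeta_K(s)L(s,\psi_1)L(s,\psi_2)L(s,\psi_1\psi_2)$ for distinct real $\psi_1,\psi_2$ mod $\idealq$: its local factors have the shape $(1-x)^{-4}$ or $(1-x^2)^{-2}$, so its Dirichlet coefficients are non-negative, and it has a simple pole at $s=1$ (note $\psi_1\psi_2\neq\psi_0$ since $\psi_2\neq\psi_1=\psi_1^{-1}$), which is incompatible with both $L(\cdot,\psi_1)$ and $L(\cdot,\psi_2)$ vanishing so near $1$. Collecting the surviving possibility into a single $\psi_{\mathrm{Siegel}}$ with a single real zero $\beta\in\bigl(1-c_K/\log(4\Nrm(\idealq)),\,1\bigr)$ — the lower bound being exactly the statement that $\beta$ lies in the generic region at $t=0$ — yields~(1) and~(3).

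The step I expect to be the real obstacle is not any of this bookkeeping but establishing the logarithmic-derivative bound of the first paragraph with $A_K$ honestly independent of $\idealq$: this means running the functional equation, the Hadamard factorization, and a Jensen-type count of the zeros of $L(s,\psi)$ in a disc of bounded radius, all uniformly in the modulus, and checking that $|d_K|$ is the only hidden $K$-dependence while the conductor contributes exactly one power of $\log\Nrm(\idealq)$. These estimates are classical for $\zeta_K$ and for ray-class $L$-functions and carry over verbatim; granting them, the remainder is the classical argument above.
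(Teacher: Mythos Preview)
Your sketch is correct and is precisely the classical de~la~Vall\'ee~Poussin/Landau argument. The paper itself does not give a proof but simply cites \cite[Theorem~5.35]{Iwaniec-Kowalski}, \cite[Lemma~2.3]{Lagarias-Montgomery-Odlyzko}, \cite[Theorem~11.7]{Montgomery-Vaughan}, and \cite[Theorem~1.9]{Weiss83}; what you have written is a faithful outline of the argument one finds in those references, so there is no discrepancy in approach --- you have simply unpacked what the paper defers to the literature.
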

\begin{proof}
    See e.g.\ \cite[Theorem 5.35]{Iwaniec-Kowalski} or \cite[Lemma 2.3]{Lagarias-Montgomery-Odlyzko}, and \cite[Theorem 11.7]{Montgomery-Vaughan} or \cite[Theorem 1.9]{Weiss83}.
\end{proof}

It is known that an $L$-function with a zero-free region as in Theorem \ref{thm:zero-free-region} is accompanied by a Prime Ideal Theorem where the prime ideals are counted with weights.
To us, the pseudopolynomial control of the error term in the following theorem is important. %

\begin{theorem}[{Prime Ideal Theorem}]
    \label{thm:prime-ideal-theorem}
    Let $N>1$ be a positive number and $\idealq \in \IdealsK $ be an ideal with norm 
    \begin{align}
        \Nrm (\idealq ) < \Kpseudopolygrowth{N} .
    \end{align} 
    Let $\psi $ be a mod $\idealq $ Hecke character.
    Then we have 
    \begin{align}
        \sum _{\substack{
            \idealp \\ 
            \idealp \notdivide \idealq ,\ \Nrm (\idealp )<N}
        }
        \psi ([\idealp ]) \log \Nrm (\idealp ) 
        = N\cdot 1_{\psi \equiv 1}
        - \frac{N^\beta }{\beta }1_{\psi \mathrm{:Siegel}}
        + O_K(N\Kpseudopolydecay{N}),
    \end{align}
    where $\beta $ is the potential Siegel zero mod $\idealq $.
    
    The terms $1_{[\cdots ]}$ are present only when the condition in the subscript is true
    and equal the constant function $1$ in such cases.
\end{theorem}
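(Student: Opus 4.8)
The plan is to deduce this from the zero-free region of Theorem~\ref{thm:zero-free-region} by the classical explicit-formula (contour-integration) argument, à la \cite[Ch.~5]{Iwaniec-Kowalski} and \cite{LangAlgNumTheory}, keeping track throughout of the dependence of every implied constant on $\Nrm(\idealq)$. First I would reduce to a von Mangoldt sum. Let $\Lambda_K$ be the von Mangoldt function on non-zero ideals of $\OK$ supported on prime powers $\idealp^k$ with $\idealp\notdiv\idealq$, so that
\begin{align}
    -\frac{L_\idealq'}{L_\idealq}(s,\psi)=\sum_{\idealp\notdiv\idealq,\ k\ge 1}\psi([\idealp])^k(\log\Nrm(\idealp))\,\Nrm(\idealp)^{-ks}\qquad(\Re s>1),
\end{align}
where $L_\idealq(s,\psi)=\prod_{\idealp\notdiv\idealq}\bigl(1-\psi([\idealp])\Nrm(\idealp)^{-s}\bigr)^{-1}$ is the (possibly imprimitive) $L$-function. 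The contribution of the prime powers with $k\ge 2$ to $\psi_K(N,\psi):=\sum_{\Nrm(\mathfrak n)<N}\Lambda_K(\mathfrak n)\psi([\mathfrak n])$ has size $O_K(\sqrt N\,\log N)$, which is absorbed into the claimed error, and the $k=1$ part is exactly the sum in the theorem; so it suffices to estimate $\psi_K(N,\psi)$. Passing from $\psi$ to the primitive character $\psi^*$ it induces changes $L_\idealq(s,\psi)$ by a finite Euler product over the $\ll_K\log\Nrm(\idealq)$ primes dividing $\idealq$, which is analytic and zero-free on $\Re s>0$ and contributes only a factor $(\log N)^{O_K(1)}$ to the estimates below; hence $L_\idealq(s,\psi)$ has the same poles and zeros in $\Re s>0$ as $\zeta_K$, resp.\ $L(s,\psi^*)$: a simple pole at $s=1$ iff $\psi\equiv 1$, and at most the Siegel zero $\beta$.

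Next, apply the truncated Perron formula with $c=1+1/\log N$ and a parameter $T\ge 2$ to be chosen:
\begin{align}
    \psi_K(N,\psi)=\frac{1}{2\pi i}\int_{c-iT}^{c+iT}\Bigl(-\frac{L_\idealq'}{L_\idealq}(s,\psi)\Bigr)\frac{N^s}{s}\,ds+O_K\!\left(\frac{N(\log N)^2}{T}\right),
\end{align}
the error being the standard truncation estimate (using $\Lambda_K(\mathfrak n)\le\log N$ and $\#\{\mathfrak n:\Nrm(\mathfrak n)=m\}=m^{o(1)}$). Then move the line of integration to $\Re s=1-c_K'/\log(\Nrm(\idealq)(T+4))$ for a suitable $0<c_K'<c_K$, which by Theorem~\ref{thm:zero-free-region} lies inside the zero-free region for all $|t|\le T$. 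Crossing the contour one picks up only a simple pole at $s=1$ with residue $N\cdot 1_{\psi\equiv 1}$ (from the simple pole of $\zeta_K$) and a simple pole at $s=\beta$ with residue $-\tfrac{N^\beta}{\beta}1_{\psi\mathrm{:Siegel}}$ (from the simple zero of $L$ there); no pole of $N^s/s$ at $s=0$ is crossed, since $1-c_K'/\log(\Nrm(\idealq)(T+4))>0$ once $N$ is large, and the trivial zeros lie further left.

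It remains to bound the shifted contour. On the vertical segment and on the horizontal segments $\Im s=\pm T$ I would invoke the classical bound
\begin{align}
    \left|\frac{L'}{L}(s,\psi^*)\right|\ll_K \log^2\!\bigl(\Nrm(\idealq)(|t|+4)\bigr)\qquad(-\tfrac12\le\sigma\le 2,\ s\text{ away from zeros}),
\end{align}
coming from the Hadamard factorization of the completed $L$-function together with the zero-density estimate $\#\{\rho:|\Im\rho-t|\le 1\}\ll_K\log(\Nrm(\idealq)(|t|+4))$ (see \cite{Lagarias-Montgomery-Odlyzko, Weiss83}); the same bound holds for $L_\idealq'/L_\idealq$ up to the correction term above, of size $(\log N)^{O_K(1)}$ on the contour. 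One first replaces $T$ by a nearby value in $[T,T+1]$ so that the horizontal lines stay at distance $\gg_K 1/\log(\Nrm(\idealq)(T+4))$ from every zero, so that the bound applies there. The vertical segment then contributes $\ll_K N^{1-c_K'/\log(\Nrm(\idealq)(T+4))}(\log N)^{O_K(1)}$, and the two horizontal segments and the Perron error together contribute $\ll_K N(\log N)^{O_K(1)}/T$.

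Finally, optimize by taking $T=\exp(\sqrt{\log N})$. Since $\Nrm(\idealq)<\Kpseudopolygrowth{N}$, this gives $\log(\Nrm(\idealq)(T+4))\asymp_K\sqrt{\log N}$, whence
\begin{align}
    N^{1-c_K'/\log(\Nrm(\idealq)(T+4))}=N\exp\!\Bigl(-\tfrac{c_K'\log N}{\log(\Nrm(\idealq)(T+4))}\Bigr)\le N\,\Kpseudopolydecay{N}
\end{align}
and $N(\log N)^{O_K(1)}/T\le N\,\abspseudopolydecay{N}$ for $N$ large, the polynomial-in-$\log N$ factors being absorbed into the exponent at the cost of enlarging the implied $O_K(1)$. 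Combining the residues, all the error contributions, and the $O_K(\sqrt N\log N)$ from the higher prime powers yields the stated formula. I expect the one genuinely delicate point to be the \emph{uniformity in $\idealq$}: one must check that every constant above depends on $K$ alone (through $c_K$, the degree and the discriminant), and it is exactly this that makes the $\asymp_K 1/\sqrt{\log N}$ width of the zero-free region at height $T=\exp(\sqrt{\log N})$ compatible with the pseudopolynomial allowance $\Nrm(\idealq)<\Kpseudopolygrowth{N}$ and thereby produces the gain $\Kpseudopolydecay{N}$; the remainder is the routine bookkeeping of the prime number theorem over number fields.
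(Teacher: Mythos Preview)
The paper does not actually prove this theorem: its entire proof reads ``See e.g.\ \cite[Theorem 5.13]{Iwaniec-Kowalski}.'' Your sketch is precisely the standard explicit-formula argument that this reference carries out, so your approach is the same as the one the paper defers to, and the main steps (Perron truncation, contour shift to the edge of the zero-free region, residue collection, and the choice $T=\exp(\sqrt{\log N})$ balancing the errors under the hypothesis $\Nrm(\idealq)<\Kpseudopolygrowth{N}$) are all correct.
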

\begin{proof}
    See e.g.\ \cite[Theorem 5.13]{Iwaniec-Kowalski}.
\end{proof}

\section{The statement and first steps of the proof}

Before stating our main result, we have to introduce a few pieces of notation. 
Let $\ideala ,\idealq  \subset \OK $ be non-zero fractional ideals. 
Write $(\ideala /\idealq \ideala )\baci \subset \ideala /\idealq \ideala $
for the set of residue classes which can be a single generator of $\ideala /\idealq \ideala $ as an $\OK $-module.
By Proposition \ref{prop:chart}, we have a canonical set map 
$(\KR )\baci \times (\ideala /\idealq  \ideala )\baci \to C(\idealq  )$.
Thus for any function $\psi $ on $C(\idealq  )$, we may consider its pullback to $(\KR )\baci \times (\ideala /\idealq  \ideala )\baci$.
We write this restriction as $(x,\alpha )\mapsto \psi (x;\alpha )$.
In particular if we fix a residue class $\alpha \in (\ideala /\idealq  \ideala )\baci $,
we obtain a function 
\begin{align}
    \psi (-;\alpha )\colon (\KR)\baci  \to \mathbf{C} .
\end{align}

\subsection{The statement}

\begin{theorem}\label{thm:main-theorem}
    Let $K$ be a number field of degree $n=[K:\Q ]= \dim _\Q K$.
    Fix a non-zero fractional ideal $\ideala \subset \OK $.

    Let $X >1 $ be a real number.
    Set $N:=X^n$. %
    
    Let $C\subset (\KR )_{<X\Nrm (\ideala )^{1/n}}$ be a convex open set and $\idealq $ be an ideal whose norm obeys %
    a bound
    \begin{align}\label{eq:norm-bound-idq}
        \Nrm (\idealq ) < \Kpseudopolygrowth{N}
    \end{align}
    with a large enough constant $O_K(1)>1$. 
    Let $\alpha \in (\ideala /\idealq \ideala)\baci $.
    Then we have 
    \begin{multline}\label{eq:main-theorem}
        \sum _{
            \substack{\pi \in C\cap \ideala ,\\ \pi = \alpha \text{ in }\ideala / \idealq \ideala }}
        \log \Nrm (\pi \ideala \inv )
        =
        \\[10pt] 
        \frac{1}{\Nrm (\ideala )\totient (\idealq )}\frac{w_K }{2^{r_1}\pi^{r_2} h_KR_K}
        \int _C 1- \psi _{\mathrm{Siegel}}(-;\alpha ) \Nrm ((-)\ideala\inv )^{\beta -1} 
        d\mu _{\add}
        \\[10pt] 
        + O_{K}\left( N %
        \Kpseudopolydecay{N } 
        \right) ,
    \end{multline}
    where by convention we ignore the term $\psi _{\mathrm{Siegel}}(-;\alpha ) \Nrm ((-)\ideala\inv )^{\beta -1}$ if there is no Siegel zero mod $\idealq $;
    in this case the integral is $\vol _{\add }(C)$.
\end{theorem}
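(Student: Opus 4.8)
The plan is to reduce the theorem to the Prime Ideal Theorem (Theorem \ref{thm:prime-ideal-theorem}) via Fourier analysis on the \idele class group, in three stages: first pass from prime elements of $\ideala$ to prime ideals, then use characters of $C(\idealq)/\Rpos$ to isolate simultaneously the residue class $\alpha$ and an angular direction, and finally approximate the convex body $C$ by segments of thin cones $\Rpos P$ cut into norm-shells. To begin, under the parametrization \eqref{eq:parametrization-of-ideals}--\eqref{eq:parametrization} a prime element $\pi\in\Primes(\ideala)$ corresponds to the prime ideal $\idealp=\pi\ideala\inv$, two such $\pi$ giving the same ideal exactly when they differ by a unit of $\OK\baci$, and $\Nrm(\pi\ideala\inv)=\Nrm(\idealp)$; moreover the normalization $C\subset(\KR)_{<X\Nrm(\ideala)^{1/n}}$ is exactly what forces $\Nrm(\pi\ideala\inv)<N=X^n$ for $\pi\in C\cap\ideala$, matching the cutoff of Theorem \ref{thm:prime-ideal-theorem}. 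After fixing an $\OK\baci$-fundamental domain $\mathcal D$ for $(\KR)\baci$, the left side of \eqref{eq:main-theorem} becomes $\sum\log\Nrm(\idealp)$ over prime ideals $\idealp$ prime to $\idealq$ whose image $[\idealp]$ under the canonical map $(\KR)\baci\times(\ideala/\idealq\ideala)\baci\to C(\idealq)$ of Proposition \ref{prop:chart} lands in a region $R_{C,\alpha}\subset C(\idealq)$ assembled out of $C$, $\mathcal D$ and the class $\alpha$.

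Next I would do Fourier analysis on the compact group $C(\idealq)/\Rpos$, which is a finite $(\OK/\idealq)\baci$-cover of the torus $(\KR)/(\OK\baci\cdot\Rpos)$. Decompose $R_{C,\alpha}$ into annulus sectors (Definition \ref{def:annulus-sector}), i.e.\ products of a norm-interval with a thin patch $P$ of the torus lying over a fixed class $\alpha$; on each such piece the indicator factors through $C(\idealq)/\Rpos$ and can be expanded into a Fourier series $\sum_\psi c_\psi\,\psi$ over mod $\idealq$ Hecke characters $\psi$. One first replaces this indicator by smooth majorants and minorants so that the coefficients $c_\psi$ decay fast enough to truncate the sum at length $Y=e^{\sqrt{\log N}/O_K(1)}$, with the smoothing defect and the truncation tail each bounded by $O_K\!\left(N\Kpseudopolydecay{N}\right)$; this produces the thin-cone count recorded in Corollary \ref{cor:prime-elements-between-N-and-N'}.

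Then, for each surviving $\psi$ and each norm-shell, apply Theorem \ref{thm:prime-ideal-theorem}: under $\Nrm(\idealq)<\Kpseudopolygrowth{N}$ one gets $\sum_{\idealp\notdivide\idealq,\ \Nrm(\idealp)<M}\psi([\idealp])\log\Nrm(\idealp)=M\cdot 1_{\psi\equiv 1}-\tfrac{M^\beta}{\beta}1_{\psi:\mathrm{Siegel}}+O_K\!\left(M\Kpseudopolydecay{M}\right)$. Only the trivial character yields a leading term $M$ and only $\psi_{\mathrm{Siegel}}$ yields the secondary term $-M^\beta/\beta$; all other characters feed only the error. Reassembling the shells, the $\psi\equiv 1$ contributions become (after a Riemann-sum approximation over a fine partition of $C$) the term $\vol_{\add}(C)$, and the $\psi_{\mathrm{Siegel}}$ contributions become $-\int_C\psi_{\mathrm{Siegel}}(-;\alpha)\,\Nrm((-)\ideala\inv)^{\beta-1}\,d\mu_{\add}$, since summing the increments of $M^\beta$ over the partition converges to $\int_C\Nrm((-)\ideala\inv)^{\beta-1}d\mu_{\add}$. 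The constant $\frac{w_K}{\Nrm(\ideala)\totient(\idealq)2^{r_1}\pi^{r_2}h_KR_K}$ comes out of the covolume computation: $\Nrm(\ideala)$ from the index of $\ideala$, $\totient(\idealq)$ from the degree of the cover $C(\idealq)/\Rpos\twoheadrightarrow(\KR)/(\OK\baci\Rpos)$, and $w_K,h_K,R_K,2^{r_1}\pi^{r_2}$ from the structure of $(\KR)/(\OK\baci\Rpos)$ and the regulator. The conversion of the thin-cone counts into an integral over a general convex $C$ is the content of \S\ref{sec:end-of-proof}.

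I expect the main obstacle to be keeping the error term \emph{uniform in} $\idealq$. The number of Hecke characters that must be handled, the Fourier truncation length $Y$, and the mesh of the cone-and-shell decomposition all grow with $\Nrm(\idealq)$ and with $N$, so the smoothing scale, the truncation, and the geometric mesh have to be balanced so that each of the four error sources — smoothing defect, truncation tail, accumulated Prime-Ideal-Theorem errors over the shells, and the geometric error of approximating $\partial C$ by cone and shell boundaries — is individually $O_K\!\left(N\Kpseudopolydecay{N}\right)$ even as $\Nrm(\idealq)$ runs up to $\Kpseudopolygrowth{N}$. This bookkeeping, together with the fact that a \emph{general} convex body (not merely a norm ball or a box) admits such an annulus-sector approximation within the allowed error, is precisely the careful analysis carried out in Appendices \ref{sec:Dirichlet}--\ref{sec:Euclid}.
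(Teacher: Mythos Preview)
Your proposal is correct and follows essentially the same route as the paper: pass from prime elements to prime ideals via Proposition \ref{prop:chart}, combine Fourier analysis on $C(\idealq)/\Rpos$ (Proposition \ref{prop:Fourier-on-idele-class-group}) with the Prime Ideal Theorem to obtain the thin-cone estimate (Corollaries \ref{cor:prime-elements-between-N-and-N'}--\ref{cor:main-thm-for-Rpos.P}), and then approximate a general convex $C$ first by annulus sectors and these in turn by sets $[N',N]\cdot P$ as in \S\ref{sec:end-of-proof}. One preliminary step the paper includes that you did not isolate is the reduction (\S 3.2) from an arbitrary fractional ideal $\ideala$ to the fixed class representative $\ideala_\lambda$, which is what keeps the implicit constants independent of $\ideala$; otherwise your outline and identification of the delicate point (uniformity of the error in $\idealq$ when balancing the Fourier truncation, smoothing, and geometric mesh parameters) match the paper's.
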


Our convention for the measure $\mu _{\add },\vol _{\add }$ is in \S \ref{sec:measures}.

Though this is not needed in the sequel,
the coefficient 
${w_K }/(2^{r_1}\pi^{r_2} h_KR_K)$
is equal to 
${2^{r_2}}/(\residue_{s=1}(\zeta _K(s)) \sqrt{|D_K|} )$
by Class Number Formula \cite[Theorem 5 on p.~161]{LangAlgNumTheory},
where $D_K$ is the discriminant of $K$.

Requirements for the $O_K(1)$ constant in \eqref{eq:norm-bound-idq} will be specified in \S\S \ref{sec:choosing-Y-and-M}, \ref{sec:choosing-Y2-and-bound-q}.

\subsection{First reduction}

Here we show that in Theorem \ref{thm:main-theorem} we may assume $\ideala $
equals the pre-chosen representative $\ideala_\lambda $ of its ideal class $\lambda $ without loss of generality.
This will also allow us to simplify the range of the convex set $C$ to $C\subset (\KR )_{<X }$, omitting $\Nrm (\ideala _\lambda )^{1/n}$, because 
$\Nrm (\ideala _\lambda )$ is an $O_K(1)$ when representatives $\ideala _\lambda $ are fixed beforehand.
The reader who is not interested in Theorem \ref{thm:main-theorem} for a general fractional ideal $\ideala $ may of course skip 
over this step
to \S \ref{sec:prime-ideals-to-elements}. 

By the bijection \eqref{eq:parametrization-of-ideals}
\begin{align}
    \IdealsK \cong \bigsqcup _{\lambda \in \ClK} (\ideala_\lambda\inv \setminus \{ 0\}  )/\OK\baci 
\end{align}
there are an index $\lambda $ and an element $a_0 \in \ideala_\lambda \inv $ such that $\ideala = a_0\ideala _\lambda $.
We have 
\begin{align}\label{eq:compatibility-ideala-and-lambda-0}
    \Nrm (\ideala )= \Nrm (a_0)\Nrm (\ideala_\lambda ).
\end{align}

We argue that Theorem \ref{thm:main-theorem} for $\ideala $ follows from the case of $\ideala _\lambda $ if we multiply some parameters by $a_0\inv \in K\baci \subset (\KR )\baci$.

\subsubsection{The left hand side}
Via the correspondence of prime elements 
$\Primes (\ideala )\isoto \Primes (\ideala_\lambda )$, $\pi \mapsto a_0\inv \pi =:\pi '$, the left hand sides of \eqref{eq:main-theorem} for $\ideala $ and $\ideala_\lambda $ are equal:
\begin{align}\label{eq:compatibility-ideala-and-lambda-1}
    \sum _{\substack{\pi \in C\cap \ideala, \\ \pi \equiv \alpha \text{ in }\ideala /\idealq \ideala }}
    \log N(\pi \ideala \inv )
    =
    \sum _{\substack{\pi ' \in a_0\inv C\cap \ideala_\lambda ,\\ \pi' \equiv a_0\inv \alpha \text{ in }\ideala_\lambda  /\idealq \ideala_\lambda }}
    \log N(\pi '\ideala _\lambda \inv ) .
\end{align}
There is a constant $A=O_K(1)$ independent\footnote{
    The independence of $A$ from $\ideala $ is not needed if one is fine with the 
    dependence on $\ideala$ of the
    constants in the error term in the theorem.
    But this independence plays a role in our future work.
    } 
of $\ideala $ such that 
$a_0 $ can be taken from $(\KR)_{<A\Nrm (\ideala \ideala_\lambda\inv )^{1/n }}$,
see e.g.\  \cite[Lemma 4.11]{KMMSY} or \cite[Lemma 4.2]{Maynard}, the latter of which is stated for principal ideals but whose proof is valid for the general case if appropriately read.
It follows that 
$a_0\inv C\subset (\KR )_{< A' X\Nrm (\ideala _\lambda )^{1/n}}$ for another $A'={O_K(1)} $.
Write $X':= A' X$
and %
\begin{align}\label{eq:compatibility-ideala-and-lambda-2}
    N' := (X')^{ n} \asymp _{K} N
    .
\end{align}

\subsubsection{The right hand side}
On the right hand sides, since the Jacobian of the multiplication map 
$[a_0]\colon a_0\inv C \to C$ is $\Nrm (a_0)$ we have the compatibility of measures 
$[a_0]^* d\mu_{\add} = N(a_0)d\mu _{\add}$.

We can compute the pullback of the norm function as $[a_0]^* N((-)\ideala \inv )= N((-)\ideala _\lambda \inv )$.

We have $a_0\inv \alpha \in (\ideala_\lambda /\idealq  \ideala_\lambda )\baci $. 
By the commutative diagram 
\begin{align}
    \xymatrix@R=10pt{
        (\KR )\baci \times (\ideala/\idealq  \ideala)\baci  \ar[dr]
        \\ 
        & C(\idealq  )
        \\ 
        (\KR )\baci \times (\ideala_\lambda /\idealq  \ideala_\lambda) \baci \ar[ur]
        \ar[uu]_{[a_0]}^{\cong }
    }
\end{align}
we have 
$[a_0]^*\psi (-;\alpha )=\psi (-;a_0\inv \alpha )$.

It follows that 
\begin{align}\label{eq:compatibility-ideala-and-lambda-3}
    &\int _C 1
    -\psi _{\mathrm{Siegel}} (-;\alpha ) 
    N((-)\ideala\inv )^{\beta -1} d\mu_{\add} \\ 
    =
    \Nrm (a_0)&\int _{a_0\inv C} 1
    -\psi _{\mathrm{Siegel}} (-;a_0\inv \alpha ) 
    N((-)\ideala_\lambda \inv )^{\beta -1} d\mu_{\add} .
\end{align}

\subsubsection{End of reduction to $\ideala_\lambda $}
Now if we assume the validity of Theorem \ref{thm:main-theorem} for $\ideala_\lambda $ and the convex body $a_0\inv C 
\subset (\KR)_{< X' \Nrm (\ideala _\lambda )^{1/n}}$,
we have
\begin{multline}
    \sum _{\substack{\pi ' \in a_0\inv C\cap \ideala_\lambda ,\\ \pi' \equiv a_0\inv \alpha \text{ in }\ideala_\lambda  /\idealq \ideala_\lambda }}
    \log N(\pi '\ideala _\lambda \inv )
    =\\ 
    \frac{w_K }{\Nrm (\ideala_\lambda )\totient (\idealq )2^{r_1}\pi^{r_2}h_KR_K }
        \int _{a_0\inv C} 1- \psi _{\mathrm{Siegel}}(-;a_0\inv \alpha ) \Nrm ((-)\ideala_\lambda \inv )^{\beta -1} 
        d\mu_{\add} 
        \\[10pt] 
        + O_{K}
        (
            N'
            \Kpseudopolydecay{N'}
        )
        .
\end{multline}
By 
\eqref{eq:compatibility-ideala-and-lambda-0}
\eqref{eq:compatibility-ideala-and-lambda-1}
\eqref{eq:compatibility-ideala-and-lambda-2}
\eqref{eq:compatibility-ideala-and-lambda-3},
this is equivalent to Theorem \ref{thm:main-theorem} for $\ideala $ and $C\subset (\KR)_{<X\Nrm (\ideala )^{1/n}}$.

This reduces the problem to the case of $\ideala = \ideala_\lambda $ for some $\lambda \in \ClK $.

\subsection{From prime ideals to prime elements}\label{sec:prime-ideals-to-elements}

We already know how many prime {\it ideals} there are within bounded norms (Theorem \ref{thm:prime-ideal-theorem}).
Here is the first step to deduce information on the number of prime {\it elements} in bounded regions.

Let the situation be as in Theorem \ref{thm:main-theorem} and let $\psi $ be a mod $\idealq $ Hecke character.

\subsubsection{Prime ideals and prime elements}
Let $\mathcal D  \subset (\KR )\baci$ be an arbitrary $\OK \baci$-fundamental domain.

We temporarily use 
the bijection \eqref{eq:parametrization}
in the form 
\begin{align}
    \IdealsK \cong \dunion _{\lambda \in \ClK } \ideala _\lambda \cap \mathcal D  .    
\end{align}
Namely we use the complete set of representatives $\{ \ideala_\lambda\inv \} _\lambda $ as opposed to $\{ \ideala _\lambda \} _\lambda $.
It follows that for every $\idealp $, there are a unique $\lambda \in \ClK $ and a $\pi \in \ideala_\lambda \cap \mathcal D  $
such that $\idealp = \pi \ideala_\lambda\inv  $.
In this situation $\pi $ is a prime element of $\ideala_\lambda $ by the very definition (\S \ref{sec:number_rings}) of this notion.
This establishes the bijection 
\begin{align}
    \{ \idealp \subset \OK \mid \text{ non-zero prime ideals } \}
    \cong \dunion _{\lambda \in \ClK } \Primes (\ideala_\lambda )\cap \mathcal D  .
\end{align}

\subsubsection{To subtract a diagonal image}
Now let $\idealp $ be a non-zero prime ideal {\it not} dividing $\idealq $ and $\pi \in \ideala_\lambda \cap \mathcal D  $ the corresponding prime element.
Let us write $\bdiag (\pi )\in \widetilde C (\idealq )$ for the canonical diagonal image
\begin{align}
    \bdiag (\pi )  \in 
    \widetilde C (\idealq )=
    (\KR )\baci \times 
    \bigoplus _{\idealp _1| \idealq }
    K\baci / (1+\idealp_1 ^{n_{\idealp_1}} \calO _{K,\idealp_1 } ) 
    \times \dsum _{\idealp _2 \notdivide \idealq } \Z .
\end{align}
We know $v_{\idealp_2}(\pi )=0$ for $\idealp_2 \notdivide \idealq  $ different from $\idealp $.  
This implies that $\bdiag (\pi )= (\pi ; \diag (\pi ); [\idealp ] )$,
where $\diag (\pi )\in \bigoplus _{\idealp_1 \divides \idealq  } K\baci /(1+\idealp_1^{n_{\idealp_1}}\calO_{K,\idealp_1 })$ is also the diagonal image, which we will simply write as $\pi $ in the sequel.
Since the Hecke character $\psi $ kills the image of $K\baci$,
we know 
\begin{align}
    \psi ([\idealp ]) &= \psi ([\idealp ] \bdiag(\pi )\inv )
    \\ 
    &= \psi ( \pi \inv ; \pi \inv ; 0 )
    \\ 
    &= \ol{\psi }( \pi ; \pi ; 0 ).
\end{align}

By changing the notation $\psi \leftrightarrow \ol\psi $, from Theorem \ref{thm:prime-ideal-theorem}
we deduce:

\begin{corollary}\label{cor:prime-ideal-theorem}
    Under the notation of Theorem \ref{thm:prime-ideal-theorem} and this \S \ref{sec:prime-ideals-to-elements}, we have 
    \begin{align}\label{eq:cor:prime-ideal-theorem}
        &\sum _{\lambda\in \ClK }
        \sum _{\substack{\pi \in \ideala_\lambda \cap \mathcal D  ,\\  
                \Nrm (\pi \ideala_\lambda\inv )< N}} 
        \psi (\pi ;\pi ;0) 
        \log \Nrm (\pi\ideala_\lambda\inv )
        \\
        =
        &N\cdot 1_{\psi \equiv 1}
        - \frac{N^\beta }{\beta }1_{\psi \mathrm{:Siegel}}
        +O_K(N\Kpseudopolydecay{N}) .
    \end{align} 
\end{corollary}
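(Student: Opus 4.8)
The plan is to transport the Prime Ideal Theorem (Theorem~\ref{thm:prime-ideal-theorem}) --- applied to the conjugate character $\ol\psi$ in place of $\psi$ --- across the bijection between non-zero prime ideals $\idealp$ and $\bigsqcup_{\lambda\in\ClK}\Primes(\ideala_\lambda)\cap\mathcal D$ constructed above. First I would observe that the right-hand side of Theorem~\ref{thm:prime-ideal-theorem} is unaffected by the replacement of $\psi$ by $\ol\psi$: one has $\ol\psi\equiv1$ exactly when $\psi\equiv1$, and $\ol\psi$ is the Siegel character mod $\idealq$ exactly when $\psi$ is (Siegel characters being real-valued, $\ol\psi=\psi$ in that case), with the same --- necessarily real --- Siegel zero $\beta$. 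Hence Theorem~\ref{thm:prime-ideal-theorem} for $\ol\psi$ reads
\begin{align*}
\sum_{\substack{\idealp\\ \idealp\notdivide\idealq,\ \Nrm(\idealp)<N}}\ol\psi([\idealp])\,\log\Nrm(\idealp)=N\cdot1_{\psi\equiv1}-\frac{N^\beta}{\beta}\,1_{\psi\mathrm{:Siegel}}+O_K(N\Kpseudopolydecay{N}).
\end{align*}

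Next I would re-index the left-hand sum over the prime elements that correspond to the prime ideals appearing in it. For a prime ideal $\idealp\notdivide\idealq$ with $\Nrm(\idealp)<N$, let $(\lambda,\pi)$ be the unique pair with $\pi\in\Primes(\ideala_\lambda)\cap\mathcal D$ and $\idealp=\pi\ideala_\lambda\inv$; then $\Nrm(\idealp)=\Nrm(\pi\ideala_\lambda\inv)$, and the computation $\psi([\idealp])=\ol\psi(\pi;\pi;0)$ carried out just before the statement yields $\ol\psi([\idealp])=\psi(\pi;\pi;0)$. Therefore the displayed left-hand side equals
\begin{align*}
\sum_{\lambda\in\ClK}\ \sum_{\substack{\pi\in\Primes(\ideala_\lambda)\cap\mathcal D,\ \pi\ideala_\lambda\inv\notdivide\idealq,\\ \Nrm(\pi\ideala_\lambda\inv)<N}}\psi(\pi;\pi;0)\,\log\Nrm(\pi\ideala_\lambda\inv),
\end{align*}
which is the left-hand side of \eqref{eq:cor:prime-ideal-theorem} except for the additional constraint $\pi\ideala_\lambda\inv\notdivide\idealq$.

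Finally I would check that dropping this constraint --- i.e.\ restoring the prime elements whose associated prime ideal divides $\idealq$ --- changes the sum by at most $O_K(N\Kpseudopolydecay{N})$ (and by nothing at all, if one adopts the convention $\psi(\pi;\pi;0):=0$ whenever $\pi\ideala_\lambda\inv$ fails to be prime to $\idealq$). There are at most $\log_2\Nrm(\idealq)$ prime ideals dividing $\idealq$, each of norm $\le\Nrm(\idealq)$, so the discrepancy is $\ll(\log\Nrm(\idealq))^2\ll\log N$ since $\Nrm(\idealq)<\Kpseudopolygrowth{N}$, and this is comfortably swallowed by $O_K(N\Kpseudopolydecay{N})$. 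I anticipate no real obstacle: all of the analytic substance is already contained in Theorem~\ref{thm:prime-ideal-theorem}, and the only point demanding a little care is matching up the two index sets --- in particular the $\idealp\notdivide\idealq$ bookkeeping.
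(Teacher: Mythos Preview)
Your proposal is correct and follows essentially the same approach as the paper: the paper's entire derivation is the computation $\psi([\idealp])=\ol\psi(\pi;\pi;0)$ carried out in the text preceding the corollary, followed by the one-line remark ``By changing the notation $\psi\leftrightarrow\ol\psi$, from Theorem~\ref{thm:prime-ideal-theorem} we deduce''. Your write-up is in fact more careful than the paper's, since you spell out why the right-hand side is invariant under $\psi\leftrightarrow\ol\psi$ and you explicitly dispose of the $\idealp\notdivide\idealq$ constraint, both of which the paper leaves implicit.
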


\section{Main theorem for thin cones---Fourier analysis}

Here we will combine Fourier analysis recalled in \S\S \ref{sec:Fourier}--\ref{sec:idele-class-group-as-a-torus} and Corollary \ref{cor:prime-ideal-theorem}
to prove Theorem \ref{thm:main-theorem} for a special type of subsets (Corollary \ref{cor:main-thm-for-Rpos.P}).

Let the situation be as in Theorem \ref{thm:main-theorem}.

By Proposition \ref{prop:chart} we have a chart 
\begin{align}
    C(\idealq  )/ \R \baci_{>0} 
    &\cong \dunion _{\lambda\in\ClK} 
    \frac{ 
            ((\KR )\baci /\Rpos ) 
            \times 
            (\ideala _\lambda /\idealq  \ideala _\lambda )\baci 
            }
            {
                \OK\baci 
                }
                .\end{align}
                Let $\lambda \in \ClK$ be an ideal class
                and let $\alpha \in (\ideala_\lambda /\idealq  \ideala_\lambda )\baci $.

\subsection{Thin cones}
Let $P\subset (\KR)\baci /\Rpos $ be a connected open 
set such that the map 
$P\to (\KR )\baci /(\OK\baci \Rpos )$
is injective.
There is an 
$\OK\baci $-fundamental domain 
$\mathcal D  \subset (\KR )\baci $ containing $\Rpos P$. Fix any such $\mathcal D  $.

It follows that 
$P\times \{\alpha \} \to C(\idealq  )/\Rpos $ 
is also injective.
Write $P\times \{\alpha \} \subset C(\idealq  )/\Rpos $ for the image as well.
Write $(\Rpos P) \times \{\alpha \}$ for the corresponding subset of either 
$(\KR )\baci \times (\ideala_\lambda /\idealq  \ideala_\lambda )\baci $ or $C(\idealq  )$.

\subsubsection{Applying Fourier analysis}\label{sec:if-P-small-enough}
Suppose further $P$ is convex and small enough, depending on $\totient (\idealq  )$, to satisfy the hypothesis of Proposition \ref{prop:Fourier-on-idele-class-group}.
The %
term {\it thin cones} refers to subsets of the form $\Rpos P$ for such a $P$.
We then have a decomposition of the indicator function $1_{P\times \{ \alpha \}}$ on $C(\idealq  )/\Rpos $ as in Proposition \ref{prop:Fourier-on-idele-class-group}:
\begin{align}\label{eq:Fourier-on-idele-applied}
    1_{P\times \{ \alpha \}}
    =
    \sum _{\psi \in \Psi \subset (C(\idealq  )/\R \baci_{>0})}
    c_\psi \psi 
    + G+H .
\end{align}
Now we take the sum $\sum _{\psi \in \Psi }c_\psi $ of \eqref{eq:cor:prime-ideal-theorem}.
Combined with \eqref{eq:Fourier-on-idele-applied} we obtain the following
\begin{multline}\label{eq:Fourier-applied-to-prime-ideals}
    \sum _{\substack{
        \pi\in \ideala _\lambda \cap \mathcal D  ,\\ \Nrm (\pi \ideala_\lambda\inv )< N, \\ (\pi ;\pi ;0) \in (\Rpos P)\times \{ \alpha \} } }
    \log \Nrm (\pi \ideala_\lambda\inv )
    +G((\pi ;\pi ;0 )) +H((\pi ;\pi ; 0))
    \\ 
    = 
    \frac{\vol _{\midele }(P\times \{\alpha \})}{\vol_{\midele } (C(\idealq  )/\R \baci_{>0})}
    \Bigl( N (1+O_n(1/M))
        \\
        - 
        \frac{N^\beta }{\beta }
        (\psi _{\mathrm{Siegel}}(P\times \{\alpha \}) + O_n(1/M)) 
    \Bigr)
    \\
    +O_K(\totient (\idealq ) Y^{n-1})O_K(N\Kpseudopolydecay{N}) .
\end{multline}
Here, note that as $\psi _{\mathrm{Siegel}}$ takes values in the discrete set $\{\pm 1\}$ and $P\times \{\alpha \}$ is connected, %
the value $\psi_{\mathrm{Siegel}}(P\times \{\alpha\})\in \{\pm 1\}$ is well defined.

The parameters $Y,M>1$ are from Proposition \ref{prop:Fourier-on-idele-class-group}.
Let us specify their values.

\subsubsection{Estimate of errors}\label{sec:choosing-Y-and-M}
First, we want that the term
\begin{align}
    O_K(\totient (\idealq  ) Y^{n-1})O_K(N\Kpseudopolydecay{N})  
\end{align}
in \eqref{eq:Fourier-applied-to-prime-ideals} remain in the form $O_K(N\Kpseudopolydecay{N}) $.
For this, we declare that the $O_K(1)$ constant in \eqref{eq:norm-bound-idq} in the statement of Theorem \ref{thm:main-theorem}
be large enough (note that $\totient (\idealq )< \Nrm (\idealq )$)
and that the parameter $Y$ be an $e^{\sqrt{\log N}/O_K(1)}$ with $O_K(1)$ large enough.
(The threshholds for these choices of course depends on the $O_K(1)$ constant in the exponent in the displayed formula.)

Having fixed $Y$, we take $M$ to be an $e^{\sqrt{\log N}/O_K(1)}$ with $O_K(1)$ even larger so that $H((\pi ;\pi ;0)) = O_n(M\frac{\log Y}{Y})$ will be an $O_K (\Kpseudopolydecay{N} )$;
it follows then that in \eqref{eq:Fourier-applied-to-prime-ideals}
\begin{align}
    \sum _\pi H((\pi ;\pi ;0)) = O_K (N \Kpseudopolydecay{N})
\end{align}
because the number of $\pi $'s is at most 
$\# \{
    x\in \ideala_\lambda \cap \mathcal D  \mid \Nrm (x\ideala_\lambda \inv )<N 
    \}
    = O_K(N)
$.

The above choices allow us to conclude also 
\begin{align}
    \sum _{\pi }G((\pi ;\pi ;0)) = O_K (N\Kpseudopolydecay{N}) .
\end{align}

As a result, for every $\alpha ,P$ as in the beginning of \S \ref{sec:if-P-small-enough} we get:
\begin{multline}\label{eq:small-P-prime-elements}
    \sum _{\substack{
        \pi\in \ideala _\lambda \cap \Rpos P ,\\ \Nrm (\pi \ideala_\lambda\inv )< N, \\ \pi \equiv \alpha \text{ in }  \ideala_\lambda /\idealq  \ideala_\lambda 
        } }
    \log \Nrm (\pi \ideala_\lambda\inv )
    = \\
    \frac{\vol_{\midele } (P\times\{\alpha\})}{\vol_{\midele } (C(\idealq  )/\R \baci_{>0})} 
    \left( N
        -\frac{N^\beta }{\beta }
        \psi _{\mathrm{Siegel}}(P\times \{\alpha \})
    \right) 
    +
    O_K(N\Kpseudopolydecay{N}) .
\end{multline}

\subsection{Slight change of notation}
Let us rewrite the condition $\Nrm (\pi \ideala_\lambda\inv )<N$ in the left hand side
of \eqref{eq:small-P-prime-elements}
as $\Nrm (\pi )< \Nrm (\ideala_\lambda ) N=:N_1$ and write this $N_1$ as $N$ afresh.
Also,
we may consider %
this formula 
for two values $N'<N$
and take the difference. This gives:
\begin{corollary}\label{cor:prime-elements-between-N-and-N'}
    Let $N'<N$.
    Let $\idealq  $ be a non-zero ideal whose norm satisfies 
    \begin{align}
        \Nrm (\idealq  )< \Kpseudopolygrowth{N'}. %
    \end{align}
    Let $\lambda \in \ClK $, $\alpha \in (\ideala_\lambda /\idealq  \ideala _\lambda )\baci $ be arbitrary, and $P\subset (\KR)\baci /\Rpos $ be a convex connected open set small enough (depending on $\totient (\idealq  )$) 
    to satisfy the hypothesis of Proposition \ref{prop:Fourier-on-idele-class-group}.

    Then we have 
    \begin{multline}\label{eq:difference-of-N-and-N'}
        \sum _{\substack{
        \pi\in \ideala _\lambda \cap \Rpos P ,\\ N'<\Nrm (\pi  )< N, \\ \pi \equiv \alpha \text{ in }  \ideala_\lambda /\idealq  \ideala_\lambda 
        } }
    \log \Nrm (\pi \ideala_\lambda\inv )
    = \\
    \frac{1}{\Nrm (\ideala_\lambda )}
    \frac{\vol_{\midele } (P\times\{\alpha\})}{\vol_{\midele } (C(\idealq  )/\R \baci_{>0})} 
    \left(
    (N-N')
    -\frac{\psi _{\mathrm{Siegel}}(P\times \{\alpha \})}{\beta \Nrm(\ideala_\lambda )^{\beta -1} }
    (N^\beta -N^{\prime \beta })
    \right)
    \\ 
    +
    O_K(N\Kpseudopolydecay{N}) .
    \end{multline}

\end{corollary}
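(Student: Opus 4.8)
The plan is to derive Corollary \ref{cor:prime-elements-between-N-and-N'} as an essentially formal consequence of \eqref{eq:small-P-prime-elements}, which is already established. First I would apply \eqref{eq:small-P-prime-elements} with the norm threshold $N$ replaced by $\Nrm(\ideala_\lambda)N$; since $\Nrm(\ideala_\lambda) = O_K(1)$, the hypothesis $\Nrm(\idealq) < \Kpseudopolygrowth{N}$ is unaffected up to adjusting the implied constant, and the condition $\Nrm(\pi\ideala_\lambda\inv) < N$ becomes $\Nrm(\pi) < \Nrm(\ideala_\lambda)N$. Renaming $\Nrm(\ideala_\lambda)N$ as $N$ afresh (so the new $N$ and $N'$ directly bound $\Nrm(\pi)$), the single-threshold estimate reads
\begin{multline}
    \sum _{\substack{
        \pi\in \ideala _\lambda \cap \Rpos P ,\\ \Nrm (\pi  )< N, \\ \pi \equiv \alpha \text{ in }  \ideala_\lambda /\idealq  \ideala_\lambda
        } }
    \log \Nrm (\pi \ideala_\lambda\inv )
    = \\
    \frac{1}{\Nrm (\ideala_\lambda )}
    \frac{\vol_{\midele } (P\times\{\alpha\})}{\vol_{\midele } (C(\idealq  )/\R \baci_{>0})}
    \left( N
        -\frac{N^\beta }{\beta \Nrm(\ideala_\lambda)^{\beta-1}}
        \psi _{\mathrm{Siegel}}(P\times \{\alpha \})
    \right)
    +
    O_K(N\Kpseudopolydecay{N}) ,
\end{multline}
where the factor $\Nrm(\ideala_\lambda)^{\beta-1}$ in the Siegel term appears because rescaling the norm cutoff by $\Nrm(\ideala_\lambda)$ turns $N^\beta/\beta$ into $(\Nrm(\ideala_\lambda)N)^\beta/\beta = \Nrm(\ideala_\lambda)^\beta N^\beta/\beta$, and one factor of $\Nrm(\ideala_\lambda)$ is absorbed into the prefactor $1/\Nrm(\ideala_\lambda)$.

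Next I would write down this same identity with $N$ replaced by $N'$; this is legitimate precisely because the Corollary hypothesizes $\Nrm(\idealq) < \Kpseudopolygrowth{N'}$, which is the stronger of the two norm constraints and hence covers both instances. Subtracting the $N'$-identity from the $N$-identity, the left-hand sides combine into the single sum over $\pi \in \ideala_\lambda \cap \Rpos P$ with $N' < \Nrm(\pi) < N$ (using additivity of the sum over the disjoint norm ranges, and noting there are no prime elements of norm exactly $N'$ to worry about, or absorbing any such boundary contribution into the error), and the main terms subtract to give $(N-N')$ and $N^\beta - N^{\prime\beta}$ exactly as in \eqref{eq:difference-of-N-and-N'}. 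The two error terms $O_K(N\Kpseudopolydecay{N})$ and $O_K(N'\Kpseudopolydecay{N'})$ combine into a single $O_K(N\Kpseudopolydecay{N})$, since $N' < N$ and the function $t \mapsto t\,\Kpseudopolydecay{t} = t\,e^{-\sqrt{\log t}/O_K(1)}$ is increasing for $t$ large (so the $N'$ error is dominated by the $N$ error).

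This argument is entirely bookkeeping, so there is no genuine obstacle; the only point requiring a little care is the interaction between the rescaling by $\Nrm(\ideala_\lambda)$ and the exponent $\beta$ in the Siegel term, to make sure the factor $\beta\,\Nrm(\ideala_\lambda)^{\beta-1}$ in the denominator of \eqref{eq:difference-of-N-and-N'} comes out correctly. One should also double-check that $\Kpseudopolydecay{N'} \ge \Kpseudopolydecay{N}$ is not needed in the wrong direction—what is used is the monotonicity of $t\mapsto t\,\Kpseudopolydecay{t}$ in the increasing direction, which holds for all sufficiently large $t$ and can be guaranteed by choosing the constant in \eqref{eq:norm-bound-idq} (equivalently the threshold on $N'$) large enough. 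Beyond this, the passage is immediate.
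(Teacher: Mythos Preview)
Your proposal is correct and follows exactly the same approach as the paper: rescale the norm cutoff by $\Nrm(\ideala_\lambda)$ in \eqref{eq:small-P-prime-elements} so that the condition reads $\Nrm(\pi)<N$, then subtract the two instances at $N$ and $N'$. Your tracking of the factor $\Nrm(\ideala_\lambda)^{\beta-1}$ in the Siegel term and your remark on combining the error terms are more explicit than what the paper spells out, but the argument is the same.
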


\subsubsection{Description as an integral}\label{sec:description-as-integral}
It will be convenient to note that we can describe the previous quantity as an integral
\begin{align}
    (N-N')-\frac{\psi _{\mathrm{Siegel}}(P\times \{\alpha\} )}{\beta \Nrm(\ideala_\lambda )^{\beta -1}}(N^\beta - N^{\prime \beta })
    = \int _{N'}^N 1
    -
    \frac{\psi _{\mathrm{Siegel}}(P\times \{\alpha\}  )}{\Nrm(\ideala_\lambda )^{\beta -1}}
    x^{\beta -1} dx .
\end{align}

Recall the basis $\bm u_1,\dots ,\bm u_{r_1+r_2}$ and the associated coordinates $\xi _1,\dots ,\xi _{r_1+r_2}$ of $\R^{r_1+r_2}$ from \S \ref{sec:measures}.
The norm map $(\KR)\baci \to \R $ looks like 
the composite 
\begin{align}
    (\KR )\baci \cong \R^{r_1+r_2}\times (S^1)^{r_2}\times \{\pm 1\}^{r_1}
    \xrightarrow{\text{projection}} 
    \R^{r_1+r_2}
    &\to \R 
    \\ 
    (\xi _1,\dots ,\xi _{r_1+r_2}) 
    &\mapsto 
    e^{\xi _{r_1+r_2}}.
\end{align}
For the reader's eyeballs' comfort, write $\xi :=\xi _{r_1+r_2}$.
By the substitution $x=e^{\xi }$ we can compute the previous integral (multiplied by the constant factor $\vol_{\mult} (P)$) as 
\begin{multline}
    \vol _{\mult}(P)
    \int _{N'}^N 
    1
    -
    \frac{\psi _{\mathrm{Siegel}}(P\times \{\alpha\}  )}{\Nrm(\ideala_\lambda )^{\beta -1}}
    x^{\beta -1} dx
    \\ 
    = 
    \int\limits _{[\log N',\log N]\times P}
    e^{\xi}
    - 
    \frac{\psi _{\mathrm{Siegel}}(-;\alpha )}{\Nrm(\ideala_\lambda )^{\beta -1}}
    e^{\beta \xi  } 
    \ |d\xi d\xi _1\dots \xi _{r_1+r_2-1}|
    .
\end{multline}
See \eqref{eq:interval-cdot-P} for the definition of the subset 
$[\log N' ,\log N]\times P$
of  
$\R \times H\times (S^1)^{r_2}\times \{\pm 1\} ^{r_1}$.

By \eqref{eq:relation-of-measures} noting $\Nrm (-)=e^\xi $, this can be written as an integral in $d\mu _{\add }$
(let us also switch to the additive notation 
$[N',N]\cdot P\subset \KR $ from \S \ref{sec:decomposition-of-R-r1+r2}):
\begin{align}
    =
    2^{r_2}\sqrt{r_1+r_2}
    \int\limits _{[N',N]\cdot P}
    1-
    \frac{\psi _{\mathrm{Siegel}}(-;\alpha )}{\Nrm(\ideala_\lambda )^{\beta -1}}
    \Nrm(-)^{\beta -1} d\mu _{\add }.
\end{align}

Now we can rewrite the main terms in \eqref{eq:difference-of-N-and-N'}.
By the choice of $\mu _{\midele }$ we have 
$\vol _{\midele }(P\times\{\alpha\})=\vol _{\mult}(P)$.
We also know the value \eqref{eq:volume-of-idele-class-group} of $\vol _{\midele }(C(\idealq  )/\Rpos )$.
We conclude that the main terms of \eqref{eq:difference-of-N-and-N'} are rewritten as
\begin{align}\label{eq:main-terms-rewritten}
    &
    \frac{1}{\Nrm(\ideala_\lambda )}
    \frac{\vol_{\midele } (P)}{\vol_{\midele } (C(\idealq  )/\R \baci_{>0})} 
    \left(
    (N-N')
    -\frac{\psi _{\mathrm{Siegel}}(P\times \{\alpha \})}{\beta \Nrm(\ideala_\lambda )^{\beta -1}}
    (N^\beta -N^{\prime \beta })
    \right)
    \\ 
    =
    &\frac{w_K}{\Nrm(\ideala_\lambda )\totient (\idealq  )2^{r_1}\pi ^{r_2}h_K R_K }
    \int\limits _{[N',N]\cdot P}
    1-
    \psi _{\mathrm{Siegel}}(-;\alpha )
    \left(
        \frac{\Nrm(-)}{\Nrm(\ideala_\lambda )} 
        \right)^{\beta -1} 
    d\mu _{\add }.
\end{align}
Namely: 

\begin{corollary}\label{cor:main-thm-for-Rpos.P}
    Let $1<N'<N$ be positive numbers and $\idealq  $ be an ideal whose norm obeys the bound: 
    \begin{align}
        \Nrm (\idealq  ) < \Kpseudopolygrowth{N'}. %
    \end{align}
    
    Then Theorem \ref{thm:main-theorem} is true for subsets of the form 
    $C= [N',N]\cdot P$
    where $P\subset (\KR)\baci /\Rpos $ is an open set
    which maps injectively into $(\KR)\baci /\Rpos \OK\baci $ and such that the image is a small convex set of size $\ll _n \varphi (\idealq  )^{-(n-2)}$
    in the sense of Definition \ref{def:small-convex-subsets}.
\end{corollary}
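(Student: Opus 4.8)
The plan is to recognize that Corollary~\ref{cor:main-thm-for-Rpos.P} is essentially a repackaging of Corollary~\ref{cor:prime-elements-between-N-and-N'} together with the integral computation already carried out in \S\ref{sec:description-as-integral}, so the whole proof is a matter of matching the two sides. Concretely, I would fix $\lambda\in\ClK$ and prove the claimed form of \eqref{eq:main-theorem} for $\ideala=\ideala_\lambda$ (the general fractional ideal $\ideala$ then follows from the first reduction), take $P$ and $\alpha$ as in the statement, and set $C:=[N',N]\cdot P$. The first task is to unwind the definition of $[N',N]\cdot P$ via \eqref{eq:interval-cdot-P} and \S\ref{sec:decomposition-of-R-r1+r2} and check that it is exactly the set of $x\in\Rpos P$ with $N'<\Nrm(x)<N$; granting this, the index set of the sum on the left of \eqref{eq:main-theorem} for this $C$ --- namely the $\pi\in C\cap\ideala_\lambda$ congruent to $\alpha$ modulo $\idealq\ideala_\lambda$ --- coincides verbatim with the index set of the sum on the left of \eqref{eq:difference-of-N-and-N'}, so the two left-hand sides are literally equal. (Convexity of $C$ is irrelevant here: what is being asserted is that the identity \eqref{eq:main-theorem} holds with $C=[N',N]\cdot P$.)

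For the right-hand side I would first observe that the hypothesis ``the image of $P$ is a small convex set of size $\ll_n\varphi(\idealq)^{-(n-2)}$ in the sense of Definition~\ref{def:small-convex-subsets}'' is, by design, the hypothesis of Proposition~\ref{prop:Fourier-on-idele-class-group} invoked in Corollary~\ref{cor:prime-elements-between-N-and-N'}, and that the assumed bound $\Nrm(\idealq)<\Kpseudopolygrowth{N'}$ is exactly what that corollary requires. Hence Corollary~\ref{cor:prime-elements-between-N-and-N'} applies and produces the main term displayed in \eqref{eq:main-terms-rewritten}. Via the substitution $x=e^{\xi}$ and the measure identities of \S\ref{sec:description-as-integral} (using $\Nrm(-)=e^{\xi}$ and $\Nrm((-)\ideala_\lambda\inv)=\Nrm(-)/\Nrm(\ideala_\lambda)$), that main term equals
\begin{align*}
\frac{w_K}{\Nrm(\ideala_\lambda)\totient(\idealq)2^{r_1}\pi^{r_2}h_KR_K}\int_{[N',N]\cdot P}1-\psi_{\mathrm{Siegel}}(-;\alpha)\,\Nrm((-)\ideala_\lambda\inv)^{\beta-1}\,d\mu_{\add},
\end{align*}
which is precisely the main term on the right of \eqref{eq:main-theorem} for $\ideala=\ideala_\lambda$ and $C=[N',N]\cdot P$. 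The error term $O_K(N\Kpseudopolydecay{N})$ is common to both statements, and when there is no Siegel zero modulo $\idealq$ the second summand is dropped throughout and the integral reduces to $\vol_{\add}(C)$, as the convention in Theorem~\ref{thm:main-theorem} demands. Comparing the two sides then gives the corollary.

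I do not expect a genuine obstacle here, since every ingredient is already in place: the proof is bookkeeping --- matching index sets, transporting the main term through the change of variables of \S\ref{sec:description-as-integral}, and checking that the smallness and conductor hypotheses line up with Corollary~\ref{cor:prime-elements-between-N-and-N'}. The one point that genuinely demands care is confirming that the abbreviations $[N',N]\cdot P$ (additive picture), $P\times\{\alpha\}$ (idèle-class picture) and $\Rpos P$ (multiplicative picture) denote compatible geometric objects under the coordinate and measure conventions of \S\ref{sec:measures} and \S\ref{sec:decomposition-of-R-r1+r2}, so that both the equality of left-hand sides and the rewriting of the integral are legitimate.
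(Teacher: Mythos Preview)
Your proposal is correct and matches the paper's approach exactly: the paper places the corollary immediately after \eqref{eq:main-terms-rewritten} with the single word ``Namely:'', treating it as nothing more than the combination of Corollary~\ref{cor:prime-elements-between-N-and-N'} with the integral rewriting of \S\ref{sec:description-as-integral}. Your bookkeeping plan---identifying the index sets, transporting the main term through the change of variables, and checking that the smallness and conductor hypotheses line up---is precisely this.
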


Let us repeat that the definition of the set $[N',N]\cdot P$ can be found at \eqref{eq:interval-cdot-P}.

\section{End of proof}\label{sec:end-of-proof}

Lastly, we will ``integrate'' the estimate \eqref{eq:difference-of-N-and-N'} \eqref{eq:main-terms-rewritten} in various small convex subsets $P\subset (\KR)\baci /\Rpos $
to prove 
Theorem \ref{thm:main-theorem}.
Some care must be taken to make sure that the bound of the error will remain uniform in the ideal parameter $\idealq  $.

Let $Y>1$ be again an auxiliary paramater which will be set to be an $e^{\sqrt{\log N}/O_K(1)}$.

\subsection{Reduction to the case of annulus sectors}\label{sec:reduction-to-special-case}
Let the notation be as in Theorem \ref{thm:main-theorem}: in particlular 
let $X>1$ be a positive number.
Write $N:= X^n $. %
Let $C\subset (\KR )_{<X}$ be a convex open set.

Let $\idealq  \subset \OK $ be an ideal whose norm obeys the upper bound \eqref{eq:norm-bound-idq}
and let $\alpha \in (\ideala_\lambda /\idealq \ideala_\lambda )\baci $ for some chosen ideal class $\lambda \in \ClK $.

\subsubsection{Elements close to the axes}\label{sec:points-too-close-to-axes-don't-matter}
The number of elements of $\ideala_\lambda $ in the region 
\begin{align}\label{eq:points-too-close-to-axes}
    \{ x\in (\KR )_{<X} \mid \ 
    &\text{ some of the coordinates } \\ 
    &x=(x_1,\dots ,x_{r_1+r_2})\in \KR \cong \R ^{r_1}\times \mathbf{C}^{r_2} \\ 
    &\text{ has size }
    |x_i| <1  \}
\end{align}
is $O_K(X^{n-1})=O_K(N/X )$.
Hence the contribution of these points to the sum $\sum _{\pi } \log \Nrm (\pi \ideala_\lambda\inv ) \le \sum _{\pi } \log N $ is $O_K(N \frac{\log X }X)$.
Also, the contribution of this region to the integral $\int _C (1-\psi _{\mathrm{Siegel}}(-;\alpha )\Nrm ((-)\ideala_\lambda\inv )d\mu $ is $O_n(N/X)$.

Therefore the contribution from points in \eqref{eq:points-too-close-to-axes} can be ignored.

\begin{definition}\label{def:annulus-sector}
    An {\it annulus sector} of size $X/Y$ is a connected component of a subset of $\KR $ of the following Baumkuchen-like shape,
    determined by the choice of parameters $a_1,\dots ,a_{r_1+r_2}$, $\theta _1,\dots ,\theta _{r_2}\in \R $:
    \begin{align}
        B=&B(a_1,\dots ,a_{r_1+r_2};\theta _1,\dots ,\theta _{r_2})
        \\ 
            =&
        \{ x \in \KR \mid \ 
        0 < | x |_{\sigma _i} -a_i < X /Y  \ \text{ for }\forall i,
        \\ 
        &0< \arg ( |x|_{\sigma _i } )-\theta _i <1/Y \pmod{2\pi \Z }
        \ \text{ for }r_1+1\le \forall i \le r_1+r_2
        \}
        .
    \end{align}
\end{definition}

We want to approximate the convex body $C$ by the union of disjoint annulus sectors of size $\le X/Y$.
First we cover $(\KR )_{<X}$ with $O_n(Y^n)$ many disjoint annulus sectors $B_j $ of size $\le X/Y$.
Then let 
\begin{align}
    B_{j } \quad (j \in J) 
\end{align}
be those annulus sectors contained in $C$ and not intersecting with the region \eqref{eq:points-too-close-to-axes}.

\begin{lemma}\label{lem:packing-argument}
    We have $\vol (C\setminus \dunion _{j \in J} B_j) \ll _n N/Y$.
    Consequently,
    there are $\ll _K N/Y$ points in $C\setminus \bigsqcup _{j \in J} B_j $. 
\end{lemma}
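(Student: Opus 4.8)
The plan is to bound the volume of the leftover region $C \setminus \bigsqcup_{j \in J} B_j$ and then convert the volume bound into a point count via the standard Lipschitz-boundary lattice-point estimate. The leftover region consists of points of $(\KR)_{<X}$ lying in annulus sectors $B_j$ that were \emph{not} placed in $J$: either because $B_j$ is not entirely contained in $C$, or because $B_j$ meets the ``close to the axes'' region \eqref{eq:points-too-close-to-axes}. I would treat these two sources separately. For the second source, by \S\ref{sec:points-too-close-to-axes-don't-matter} the region \eqref{eq:points-too-close-to-axes} itself has volume $O_K(N/X)$, and any annulus sector meeting it is contained in a slightly enlarged version of it (enlarged by $X/Y$ in each radial direction and $1/Y$ in each angular direction), whose volume is still $O_n(N/X) = O_n(N/Y)$ since $Y \le X$. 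So this source contributes $\ll_n N/Y$ to the leftover volume.

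The first source is the genuine packing estimate: I would show that any annulus sector $B_j$ that intersects $C$ but is not contained in $C$ must meet the boundary $\partial C$, and that the union of all such sectors is contained in an $O(X/Y)$-neighborhood of $\partial C$ (in the Euclidean metric on $\R^n \cong \KR$), since each $B_j$ has Euclidean diameter $\ll_n X/Y$. Because $C$ is a convex body contained in $(\KR)_{<X}$, its boundary $\partial C$ has $(n-1)$-dimensional surface area $\ll_n X^{n-1}$ (the surface area of a convex set is monotone under inclusion, and the bounding region has surface area $\ll_n X^{n-1}$). Hence the $(X/Y)$-neighborhood of $\partial C$ has volume $\ll_n X^{n-1} \cdot (X/Y) = N/Y$. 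Adding the two contributions gives $\vol(C \setminus \bigsqcup_{j\in J} B_j) \ll_n N/Y$.

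For the point count, I would invoke a lattice-point counting estimate of the form: for a region $R \subset \KR$ whose boundary is covered by the image of $O(1)$ Lipschitz maps from $[0,1]^{n-1}$ with Lipschitz constant $\ll_K D$, the number of points of $\ideala_\lambda$ in $R$ is $\vol(R)/\covol(\ideala_\lambda) + O_K(D^{n-1} + 1)$. Here the leftover region $C \setminus \bigsqcup_{j \in J} B_j$ is contained in the union of the $O(X/Y)$-neighborhood of $\partial C$ and the enlarged axis region; both have boundaries parametrizable with Lipschitz constant $\ll_K X$, and the covolume of $\ideala_\lambda$ is an $O_K(1)$, so the point count is $\ll_K N/Y + X^{n-1} \ll_K N/Y$ (using $Y \le X$, so $X^{n-1} = N/X \le N/Y$). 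This matches the claim, and I expect this counting step to cite the lattice-point machinery set up in Appendices \ref{sec:Dirichlet}--\ref{sec:Euclid}.

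The main obstacle I anticipate is not any single estimate but the bookkeeping needed to keep all error terms \emph{uniform in $\idealq$} — in particular making sure the implied constants depend only on $K$ (hence on $n$, the covolumes $\covol(\ideala_\lambda)$, etc.) and never on $\idealq$ or on $C$. The convexity of $C$ is what makes the surface-area bound clean and constant-free; without it one could not control $\vol$ of the $(X/Y)$-neighborhood of $\partial C$ so easily, which is presumably why the hypothesis that $C$ is convex is essential here.
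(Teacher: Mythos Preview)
Your approach is essentially the same as the paper's: bound the diameter of each annulus sector by $O_n(X/Y)$, use that sectors not in $J$ must meet $\partial C$ or the axis region \eqref{eq:points-too-close-to-axes}, and control the total volume via the surface-area bound $\area(\partial C)=O_n(X^{n-1})$ for convex bodies (the paper cites \cite[Lemma A.1]{LinearEquations} for this, which is the same monotonicity fact you invoke). The paper lumps the axis region into the surface-area term rather than treating it separately as you do, but the arithmetic is equivalent.

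Two small corrections. First, your intermediate claim that the enlarged axis region still has volume $O_n(N/X)$ is not right: enlarging $\{|x_i|<1\}$ by $X/Y$ in the $i$th radial direction gives $\{|x_i|<1+X/Y\}$, whose volume inside $(\KR)_{<X}$ is $O_n((1+X/Y)X^{n-1})=O_n(N/Y)$, not $O_n(N/X)$. Your conclusion $O_n(N/Y)$ is correct, but reached for the wrong reason. Second, your expectation about the appendices is off: Appendices \ref{sec:Dirichlet}--\ref{sec:Euclid} contain no lattice-point counting machinery (they cover conventions on $\KR$, Fourier analysis on tori, the id\`ele class group, and a basis lemma). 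The paper in fact gives no argument at all for the ``Consequently'' point count; it is taken as standard that a lattice of covolume $O_K(1)$ has $\ll_K V + O_K(X^{n-1})$ points in a region of volume $V$ contained in the difference of two convex bodies in $(\KR)_{<X}$, which is what your Lipschitz argument amounts to.
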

\begin{proof}
Let $B_j $ ($j\in J'$) be those annulus sectors intersecting with $\partial C$ or the region \eqref{eq:points-too-close-to-axes}.
Since an annulus sector of size $\le X/Y$ has diameter $O_n(X/Y)$, the volume covered by $B_j$'s ($j\in J'$) is
    $O_n(\frac{X}{Y}\cdot \bigl( \area (\partial C)+O_n(X^{n-1}) \bigr) ) $.
    Because $\area (\partial C)=O_n(X^{n-1})$ (see e.g.\ \cite[Lemma A.1]{LinearEquations}),
    this quantity is an 
    $O_n(X^n/Y)=O_n(N/Y)$.
\end{proof}

\subsubsection{Annulus sectors inside $C$}
By Lemma \ref{lem:packing-argument}, we have 
\begin{align}
    \sum _{\substack{
        \pi\in\ideala_\lambda \cap C, \\
        \pi \equiv \alpha \text{ in }\ideala_\lambda /\idealq  \ideala_\lambda 
        }}
    \log \Nrm (\pi\ideala_\lambda\inv )
    = 
    \left( 
        \sum _{j\in J}
    \sum _{\substack{
        \pi\in\ideala_\lambda \cap B_j , \\
        \pi \equiv \alpha \text{ in }\ideala_\lambda /\idealq  \ideala_\lambda 
        }}
    \log \Nrm (\pi\ideala_\lambda\inv )
    \right)
    +
    O_n((N\log N )/Y)  .
\end{align}
Assuming that the conclusion of Theorem \ref{thm:main-theorem} holds for $B_j $, $j\in J$ (with uniform $O_K(1)$ constants of course), 
we can write 
\begin{align}\label{eq:after-packing}
    \sum _{j\in J}
    &\sum _{\substack{
        \pi\in\ideala_\lambda \cap B_j , \\
        \pi \equiv \alpha \text{ in }\ideala_\lambda /\idealq  \ideala_\lambda 
        }}
    \log \Nrm (\pi\ideala_\lambda\inv )
    =\\ 
    &\frac{w_K }{\Nrm (\ideala _\lambda )\totient (\idealq  ) 2^{r_1}\pi^{r_2}h_KR_K }
    \int\limits 
        _{\dunion _{j\in J}
        B_j } 
    1-\psi _{\mathrm{Siegel}} (-;\alpha ) \Nrm ((-)\ideala_\lambda\inv )^{\beta -1}d\mu 
    \\
    &+ \# J \cdot O_{K} (N\Kpseudopolydecay{N}) .
\end{align}
We know $\# J = O_n(Y^n)$
bacause the total number of annulus sectors obeys this bound.
It follows that the last error term in \eqref{eq:after-packing}
is an $O_K(N\Kpseudopolydecay{N})$
if the constant in $Y=e^{\sqrt{\log N}/O_K(1)}$ is taken large enough.

The difference between $\int _{\dunion _{j\in J}B_j }  $ and $\int _C $
is, again by Lemma \ref{lem:packing-argument},
bounded by $O(1)\cdot \# (C\setminus \dunion _{j\in J}B_j )=O_n(N/Y)$.
Thus we are reduced to proving Theorem \ref{thm:main-theorem} for $C=B_j $ an annulus sector of size $\le X/Y$ disjoint from the region \eqref{eq:points-too-close-to-axes}.%

\subsection{Proof for annulus sectors}\label{sec:proof-of-special-case}

Now we claim and prove that Theorem \ref{thm:main-theorem} is true for annulus sectors $B\subset \KR _{<X}$ of size $< X/10$ say, disjoint from \eqref{eq:points-too-close-to-axes}.

Let $Y_2>1$ be a new auxiliary integer parameter\footnote{
Here we could have used the letter $Y$ afresh but we prefer to use a different symbol
bacause in the logical order, the $Y$ in \S \ref{sec:reduction-to-special-case} cannot be specified until we specify $Y_2$.
} 
of the form $e^{\sqrt{\log N}/O_K(1)}$.

\subsubsection{Annulus sectors in the multiplicative coordinates}
\label{sec:annulus-sectors-are-cubes-in-some-coordinates}

Via the bijection 
$(\KR )\baci \cong \R^{r_1+r_2}\times (\RZ )^{r_2}\times \{ \pm 1\}^{r_1}$,
our annulus sectors look like cubes contained in $[0,\log X]^{r_1+r_2}\times (\RZ )^{r_2}\times\{\pm 1\}^{r_1}$ 
whose facets are parallel to coordinate hyperplanes.
The left-hand bound ``$0$'' in $[0,\log X]^{r_1+r_2}$ is due to the assumption that our set is disjoint from \eqref{eq:points-too-close-to-axes}.

\subsubsection{Cubes in $H\times (S^1)^{r_2}\times \{\pm 1\}^{r_1}$}\label{sec:cubes} %
We consider the image of $[0,\log X]^{r_1+r_2}$ by the projection 
$\R^{r_1+r_2}=\R\times H \to H$. %
We cover it with cubes of size $ 1/Y_2$ with respect to a fixed $\R$-basis of $H$.
This requires $O_n((Y_2\log X)^{r_1+r_2-1})$ cubes.

We also cover the torus $(\RZ )^{r_2}$ by $Y_2^{r_2}$ cubes of size $ 1/Y_2$.

\begin{definition}\label{def:cube-in-KR-Rpos}
    By a {\it cube} in $(\KR)\baci /\Rpos = H\times (\RZ )^{r_2}\times \{ \pm 1\}^{r_1}$ let us mean a subset $P$ obtained as the product of 
    a cube in $H$ and a cube in $(\RZ )^{r_2}$ of sizes $1/Y_2$ as in \S \ref{sec:cubes}, placed in one of the connected components $\{ \pm 1\}^{r_1}$.
    There are $O_n(Y_2^{n-1}\log ^{r_1+r_2-1} X)$ of them.
\end{definition}

\subsubsection{Sandwiching by special type of subsets}
Let $P$ be a cube in $(\KR)\baci /\Rpos $ in the sense of Definition \ref{def:cube-in-KR-Rpos}
and consider the intersection 
$B\cap \Rpos P $ in $(\KR)\baci $.

For $1<a<b$, recall from \S \ref{sec:decomposition-of-R-r1+r2} that we write $[a,b]\cdot P \subset (\KR)\baci $ for the set of points 
where the projection to $H\times (S^1)^{r_2}\times \{\pm 1\}^{r_1}$ belong to $P$
and the value of $\Nrm (-)$ is in $[a,b]$. 

By \S \ref{sec:points-too-close-to-axes-don't-matter}, up to negligible error  
there are two real numbers $\log (N/X)< a<b<\log N$ satisfying 
\begin{align}
    [a,b]\times P \subset B\cap (\R \times P) \subset [a-\varepsilon ,b+\varepsilon ]\times P
    \quad \text{ in } \R \times H\times (S^1)\times \{\pm 1\}^{r_1},
\end{align}
where $\varepsilon = O_n(1 / Y_2)$ by \S \ref{sec:annulus-sectors-are-cubes-in-some-coordinates}.
In the additive notation, this means 
\begin{align}
    [e^a,e^b]\cdot P \subset B\cap \Rpos P \subset [e^a(1-\varepsilon ), e^b(1+\varepsilon )]\cdot P
    \quad \text{ in }\KR =\R^{r_1}\times \mathbf{C} ^{r_2}.
\end{align}

It follows that 
\begin{align}\label{eq:sum-le-sum-le-sum}
    \sum _{\substack{
        \pi \in \ideala_\lambda \cap ([e^a,e^b]\cdot P)
         \\ 
        \pi \equiv \alpha \text{ in }\ideala_\lambda /\idealq  \ideala_\lambda } }
    \log \Nrm (\pi\ideala_\lambda\inv )
    &\le 
    \sum _{\substack{
        \pi \in \ideala_\lambda \cap (B\cap \Rpos P )
         \\ 
        \pi \equiv \alpha \text{ in }\ideala_\lambda /\idealq  \ideala_\lambda  } }
    \log \Nrm (\pi\ideala_\lambda\inv )
    \\ 
    &\le 
    \sum _{\substack{
        \pi \in \ideala_\lambda \cap ([e^a(1-\varepsilon ),e^b(1+\varepsilon )]\cdot P )
         \\ 
        \pi \equiv \alpha \text{ in }\ideala_\lambda /\idealq  \ideala_\lambda } }
    \log \Nrm (\pi\ideala_\lambda\inv ) .
\end{align}

\subsubsection{Applying the main theorem for the special case}\label{sec:choosing-Y2-and-bound-q}
We want to apply Corollary \ref{cor:main-thm-for-Rpos.P}
to the sets $[e^a,e^b]\cdot P$ and $[e^a(1-\varepsilon ),e^b(1+\varepsilon )]\cdot P$
to rewrite \eqref{eq:sum-le-sum-le-sum}.
We have to verify the hypotheses on $\Nrm (\idealq  )$ and the size of $P$:
\begin{itemize}
    \item We know $e^{\sqrt{\log (e^a(1-\varepsilon ))}} > e^{\sqrt{\log (N/X)-\varepsilon }}>e^{\sqrt{\log N}/2}>\Nrm (\idealq  )$ by \eqref{eq:norm-bound-idq};
    \item By the construction of $P$, the image of $P\times \{\alpha \}$ to $(\KR)\baci /\Rpos $ is a cube of size $\ll _K 1/Y_2$.
    By declaring that we shall take the $O_K(1)$ in the bound \eqref{eq:norm-bound-idq} of $\Nrm (\idealq  )$ large enough depending on that in $Y_2$, we can ensure that 
    the image of $P\times \{\alpha \}$ is small enough (and automatically convex) to satisfy 
    $1/Y_2 \ll _K \Nrm (\idealq  )^{-(n-2)}< \totient (\idealq  )^{-(n-2)}$ as required in Proposition \ref{prop:Fourier-on-idele-class-group}.
\end{itemize}
Under these circumstances we can apply Corollary \ref{cor:main-thm-for-Rpos.P} and obtain an estimate of our sum from below:
\begin{multline}\label{eq:sandwich}
    \sum _{\substack{
            \pi \in \ideala_\lambda \cap (B\cap \Rpos P ) 
             \\ 
            \pi \equiv \alpha \text{ in }\ideala_\lambda /\idealq  \ideala_\lambda } }
        \log \Nrm (\pi\ideala_\lambda\inv )
        \quad >
        \\ 
    \frac{w_K }{\Nrm (\ideala_\lambda )\totient (\idealq )2^{r_1}\pi^{r_2} h_KR_K}
        \int\limits _{[e^a,e^b]\cdot P} 1- \psi _{\mathrm{Siegel}}(-;\alpha ) \Nrm ((-)\ideala_\lambda \inv )^{\beta -1} 
        d\mu _{\add}
        \\
        + O_{K}(N\Kpseudopolydecay{N}),
\end{multline}
and from above by the same expression except that the domain of integration 
$[e^a,e^b]\cdot P$ is replaced by 
$[e^a(1-\varepsilon ),e^b(1+\varepsilon )]\cdot P$. 

\subsubsection{Difference of upper and lower bounds}
The difference of the main terms of the upper and lower bounds in \eqref{eq:sandwich}
is bounded by
\begin{multline}\label{eq:difference-of-the-integrals}
    \frac{w_K }{\Nrm (\ideala_\lambda )\totient (\idealq )2^{r_1}\pi^{r_2} h_KR_K} \cdot 
    \\ 
        \int\limits _{([e^a(1-\varepsilon ),e^b(1+\varepsilon )]\cdot P) \setminus ([e^a,e^b ]\cdot P) }
        \Bigl| 
            1- \psi _{\mathrm{Siegel}}(-;\alpha ) \Nrm ((-)\ideala\inv )^{\beta -1} 
        \Bigr|
            d\mu _{\add}
\end{multline}
and is an $O_{K}(N / (Y_2)^n)$ because:
the factor in front of the integral is an $O_K(1)$,
the integrand always has absolute value $\le 2$ and 
the domain of integration has volume $< 2N\varepsilon \area (P)\ll N/(Y_2)^{n}$.
By \eqref{eq:sandwich} and the triangle inequality we conclude 
\begin{multline}\label{eq:conclusion-for-each-P}
    \sum _{\substack{
            \pi \in \ideala_\lambda \cap (B\cap \Rpos P ) 
             \\ 
            \pi \equiv \alpha \text{ in }\ideala_\lambda /\idealq  \ideala_\lambda } }
        \log \Nrm (\pi\ideala_\lambda\inv )
        =
        \\ 
        \frac{w_K }{\Nrm (\ideala_\lambda )\totient (\idealq )2^{r_1}\pi^{r_2} h_KR_K}
        \int\limits _{B\cap \Rpos P} 
        1- \psi _{\mathrm{Siegel}}(-;\alpha ) \Nrm ((-)\ideala_\lambda \inv )^{\beta -1} 
        d\mu _{\add}
        \\[10pt] 
        + O_{K}(N/(Y_2)^{n})+O_K(N\Kpseudopolydecay{N}).
\end{multline}

\subsubsection{To take the sum over the cubes}
Take the sum $\sum _P$ of \eqref{eq:conclusion-for-each-P} over the cubes $P$ in the sense of Definition \ref{def:cube-in-KR-Rpos}.
Since there are $O_K((Y_2)^{n-1}\log ^{r_1+r_2-1}X)$ of them, we obtain 
\begin{multline}\label{eq:two-error-terms}
    \sum _{\substack{
            \pi \in \ideala_\lambda \cap B  
             \\ 
            \pi \equiv \alpha \text{ in }\ideala_\lambda /\idealq  \ideala_\lambda } }
        \log \Nrm (\pi\ideala_\lambda\inv )
        =
        \\ 
        \frac{w_K }{\Nrm (\ideala_\lambda )\totient (\idealq )2^{r_1}\pi^{r_2} h_KR_K}
        \int _{B} 
        1- \psi _{\mathrm{Siegel}}(-;\alpha ) \Nrm ((-)\ideala_\lambda \inv )^{\beta -1} 
        d\mu _{\add}
        \\[10pt]
        + O_{K}\left( N\frac{\log ^{r_1+r_2-1}X}{Y_2}\right)
        \\ 
        +O_K((Y_2)^{n-1}\log ^{r_1+r_2-1}X \cdot N\Kpseudopolydecay{N} ).
\end{multline}
The first error term in \eqref{eq:two-error-terms} is an $O_K(N\Kpseudopolydecay{N})$ with the $O_K(1)$ in the exponent being more or less the same as that in $Y_2=e^{\sqrt{\log N}/O_K(1)}$.

By taking the $O_K(1)$ constant in $Y_2$ large enough compared to the constant in $\Kpseudopolydecay{N}$ in the second error term,
we can make the second error term also an 
$O_K(N\Kpseudopolydecay{N})$.

If follows that if we set the $O_K(1)$ constant in $Y_2$ large enough, the two error terms in \eqref{eq:two-error-terms} become a new $O_K(N\Kpseudopolydecay{N})$.
Having fixed $Y_2$, we can now finalize the $O_K(1)$ constant in the restriction \eqref{eq:norm-bound-idq} on $\Nrm (\idealq )$ following the requirements from \S\S \ref{sec:choosing-Y-and-M}, \ref{sec:choosing-Y2-and-bound-q}.
This proves Theorem \ref{thm:main-theorem} for annulus sectors $B$ and accomplishes the goal of \S \ref{sec:proof-of-special-case}.

Then we run the process of \S \ref{sec:reduction-to-special-case} %
to fix the $O_K(1)$ constant in $Y=e^{\sqrt{\log N}/O_K(1)}$ and deduce Theorem \ref{thm:main-theorem} in general.
This completes the proof of Theorem \ref{thm:main-theorem}.

\appendix

\section{Conventions on \texorpdfstring{$\KR $}{KR}}\label{sec:Dirichlet}

We have to fix some conventions in order to be able to do analysis on $(\KR )\baci $ and its quotients
$(\KR)\baci /\OK \baci $ and $(\KR )\baci /\OK\baci \Rpos $.
Following the usual convention, we number the embeddings as
\begin{align}
    \sigma _1,\dots ,\sigma _{r_1};\quad \sigma _{r_1+1},\dots ,\sigma _{r_1+r_2}; \quad \sigma _{r_1+r_2+1},\dots ,\sigma _{r_1+2r_2}
\end{align}
where $\sigma _1,\dots ,\sigma _{r_1}$ are the different real embeddings and $\sigma _{r_1+1},\dots ,\sigma _{r_1+r_2}$ represent the different conjugate pairs of imaginary embeddings.

\subsection{Norm functions}\label{sec:norm}
For a non-zero ideal $\ideala \subset \OK $, we write 
$\Nrm (\ideala ):= \# (\OK /\ideala )$ for its norm.
For a non-zero element $\alpha \in \OK $, by slight abuse of notation we write 
$\Nrm (\alpha ):= \Nrm (\alpha \OK ) = |N_{K/\Q}(\alpha )|$.
The norm functions $\Nrm (-)$ and $N_{K/\Q }$ extend to $\KR $ naturally,
\begin{align}
    \xymatrix@C=100pt{
        K \ar[r]^{\Nrm \text{ or }N_{K/\Q }}\ar@{_{(}->}[d] 
        &\Q \ar@{_{(}->}[d] 
        \\ 
        \KR \ar[r]^{\Nrm \text{ or }N_{K /\Q }} 
        & \R ,
    }
\end{align}
where we have denoted the extensions by the same symbols.

Let us emphasize that $\Nrm (-)$ always takes positive values; %
not to be confused with the ring-theoretic norm $N_{K/\Q}(-)$, which does take negative values unless $K$ is totally imaginary.

\subsection{The group structure of $(K\otimes _\Q \R)\baci $}\label{sec:Minkowski-iso}
Let $\sigma _i \otimes _\Q \R \colon \KR \to \R $ ($1\le i\le r_1$) and $\sigma _i\otimes _\Q \R \colon \KR \to \mathbf{C} $ ($r_1 + 1\le i\le r_1+r_2$) be the unique $\R $-linear extentions.
The map they jointly define 
\begin{align}\label{eq:Minkowski-iso}
    \KR \isoto \R ^{r_1}\times \mathbf{C} ^{r_2}
\end{align}
is known to be an isomorphism of $\R$-algebras. %

It follows that 
\begin{align}\label{eq:iso-KRbaci}
    (\KR)\baci
    \ \cong\ 
    (\R\baci)^{r_1}\times (\mathbf{C}\baci)^{r_2}
    \ \cong\  
    \R ^{r_1+r_2}\times (S^1)^{r_2}\times \{\pm 1\}^{r_1},
\end{align}
where the second isomorphism %
follows the conventions in 
\S \ref{sec:multiplicative-groups};
in particular the map $\mathbf{C}\baci \to \R $ is defined by $z\mapsto \log (|z|^2)$.

\subsection{Decomposition of $\R^{r_1+r_2}$}\label{sec:decomposition-of-R-r1+r2}
Write 
\begin{align}
    \bm u_{r_1+r_2}:= \frac{1}{n}
    (\numberOfTerms{1,\dots,1}{r_1}, 
    \numberOfTerms{2,\dots ,2}{r_2} ) 
    \in \R^{r_1+r_2}.
\end{align}
In \eqref{eq:iso-KRbaci}, the diagonal image $\Rpos \subset (\R\baci)^{r_1}\times (\mathbf{C}\baci)^{r_2}$ corresponds to the subspace 
\begin{align}
    \R \cdot \bm u_{r_1+r_2}
    \times 
    \{1\}^{r_2}
    \times 
    \{+1\}^{r_1}
    \quad\subset\quad 
    \R ^{r_1+r_2}\times (S^1)^{r_2}\times \{\pm 1\}^{r_1} .
\end{align}
Let $H\subset \R^{r_1+r_2}$ be the following $\R $-linear subspace of codimension $1$:
\begin{align}
    H:= \left\{ (x_1,\dots ,x_{r_1+r_2}) \ \middle| \ 
    \sum _{i=1}^{r_1+r_2} x_i =0 \right\} .
\end{align}
This corresponds to 
elements
$x=(x_1,\dots ,x_{r_1};
z_1,\dots ,z_{r_2}) \in (\R\baci )^{r_1}\times (\mathbf{C}\baci) ^{r_2}$
of norm $\pm 1$.

We have the direct sum decomposition 
\begin{align}
    \R ^{r_1+r_2}
    = \R \cdot \bm u_{r_1+r_2}
    \oplus 
    H
\end{align}
and hence the isomorphism
\begin{align}
    (\KR)\baci /\Rpos \cong H\times (S^1)^{r_2}\times \{\pm 1\}^{r_1}.
\end{align}
This also gives a decomposition 
\begin{align}
    (\KR )\baci &=\R \times H\times (S^1)^{r_2}\times \{\pm 1\}^{r_1} 
    \\ 
    &= \Rpos \times (\KR)\baci /\Rpos .
\end{align}

Given a subset $P\subset (\KR )\baci/\Rpos $, we write $\Rpos \cdot P$ for its inverse image in $(\KR)\baci $.
When we see it as a subset of $\R \times H\times (S^1)^{r_2}\times \{\pm 1\}^{r_1}$,
we prefer to use the symbol $\R \times P$.

If furthermore $0<a<b$ are given, we write 
\begin{align}\label{eq:interval-cdot-P}
    [a,b]\cdot P := \{ x\in \Rpos \cdot P \mid a\le \Nrm (x)\le b \} .
\end{align}
When we see it as a subset of $\R \times H\times (S^1)^{r_2}\times \{ \pm 1\}^{r_1}$,
we write it as $[\log a,\log b]\times P$.

\subsection{The domain $(\KR)_{<X}$}\label{sec:the_domain}
For an embedding $\sigma \colon K\inj \mathbf{C} $ and $x\in \KR $, we set 
\begin{align}
    |x|_\sigma := |(\sigma \otimes _\Q \R )(x)|,
\end{align}
where $|-| $ in the right hand side is the usual absolute value in $\mathbf{C} $.
For a positive number $X>1$, we define a subset $(\KR )_{<X} \subset \KR $ by 
\begin{align}
    (\KR )_{<X} 
    :=
    \{ x\in \KR \mid |x|_{\sigma _{i}} < X \text{ for all }1\le i\le r_1+r_2 \} .
\end{align}

\subsection{Measures}\label{sec:measures}
We will consider two measures on $(\KR)\baci $.
One is the restriction of the Lebesgue measure by the inclusion
$(\KR)\baci \subset \KR \cong \R^{r_1}\times \mathbf{C} ^{r_2}\cong \R^{n}$,
where we took the isomorphism $\mathbf{C} \cong \R^2$; $x+iy \leftrightarrow (x,y)$.
We denote it by $\mu _{\add }$.

The other is obtained through 
$(\KR)\baci \cong \R^{r_1+r_2}\times (S^1)^{r_2}\times \{ \pm 1\} ^{r_1}$.
We give $\R^{r_1+r_2}$ the Lebesgue measure and $(S^1)^{r_2}$ the Haar measure such that the volume of $(S^1)^{r_2}$ is $(2\pi )^{r_2}$. (In other words, as far as volume is concerned, we regard $S^1$ as $\R / 2\pi \Z$, not $\RZ$.)
Each point of $\{\pm 1\}^{r_1}$ is given weight $1$.
We denote the resulting product measure on $(\KR)\baci $ by $\mu_{\mult}$.

By some calculus one finds
\begin{align}
    d\mu _{\mult } = \frac{2^{r_2}}{\Nrm (-)} d\mu _{\add } \quad \text{ on }(\KR)\baci , 
\end{align}
where $\Nrm (-)$ is the norm function in \S \ref{sec:norm}.

    We endow $H\subset \R^{r_1+r_2}$ with the induced metric and hence the measure $\mu _H$.
    This endows $(\KR )\baci /\Rpos =H\times (S^1)^{r_2}\times \{\pm 1\}^{r_1}$ with the product measure. We denote it by $\mu _{\mult}$ as well.

    Let $\bm u_{1},\dots ,\bm u_{r_1+r_2-1}\in H$ be any orthonormal basis 
    and $(\xi _1,\dots ,\xi _{r_1+r_2})$ be the coordinate system corresponding to the basis $\bm u_{1},\dots ,\bm u_{r_1+r_2}$ of $\R ^{r_1+r_2}$.
    (See \S \ref{sec:decomposition-of-R-r1+r2} for the vector $\bm u_{r_1+r_2}$.)
    Note that the measure $\mu_H$ on $H$ is exactly the one associated with the volume form $d\xi _1\dots d\xi _{r_1+r_2-1}$ on $H$. 
    The inner product of $\bm u_{r_1+r_2}$ and the unit normal vector $(1,\dots ,1)/\sqrt{r_1+r_2}$ of $H$ is $1/\sqrt{r_1+r_2}$.
    This implies the following relation of measures:
    \begin{align}\label{eq:relation-of-measures}
        |d\xi _1 \dots d\xi_{r_1+r_2}| = \sqrt{r_1+r_2}\, d\mu _{\mult}
        =
        \frac{2^{r_2}\sqrt{r_1+r_2}}{\Nrm (-)} d\mu _{\add}.
    \end{align}

\subsection{The regulator}\label{sec:fundamental-domain-KR}

Dirichlet's Unit Theorem says that we have an isomoprhism as follows, which we fix:
    \begin{align}\label{eq:known-structure}
        \OK\baci \cong (\Z / w_K \Z )\times \Z ^{r_1+r_2-1}.
    \end{align}
By its proof, we know that the inclusion map followed by the projection 
\begin{align}
    \OK\baci \inj (\KR )\baci 
    \cong 
    \R \times 
        H \times 
        (S^1)^{r_2}
        \times 
        \{\pm 1\}^{r_1} 
    \surj 
    H
\end{align}
has image which is a full-rank lattice.
The {\it regulator} $R_K>0$ of $K$ is the real number satisfying 
$\vol _{\mult}(H/\Z ^{r_1+r_2-1}) = R_K\sqrt{r_1+r_2}$.

\subsection{The structure of $(\KR)\baci /\OK\baci $}\label{sec:we-take-this-opportunity}
We take this opportunity to compute the abelian group $(\KR)\baci/\OK\baci $ further.
By \S \ref{sec:fundamental-domain-KR} we know it decomposes as 
\begin{align}
    \R \times \frac{(\KR )\baci}{\OK\baci \cdot \Rpos }
    =\R \times  \frac{H\times \{\pm 1\}^{r_1}\times (S^1)^{r_2}}{  \OK\baci }
    .
\end{align}
We have the next commutative diagram with exact rows 
\begin{align}
    \xymatrix{
        1\ar[r] 
        &\Z /w_K\Z \ar[r] \ar[d]
        &\OK\baci \ar[r] \ar[d]
        &\Z ^{r_1+r_2-1}\ar[r] \ar[d]
        &1
        \\
        1\ar[r] 
        &\{\pm 1\}^{r_1}\times (S^1)^{r_2}\ar[r] 
        & H \times \{\pm 1\}^{r_1}\times (S^1)^{r_2} \ar[r] 
        &H \ar[r]
        &0
    }
\end{align}
    This gives an exact sequence
    \begin{align}\label{eq:KR-mod-OK.R-is-torus}
        1\to \frac{\{\pm 1\}^{r_1}\times (S^1)^{r_2}}{ (\OK\baci)_{\tors} }
        \to \frac{(\KR )\baci}{\OK\baci \cdot \Rpos}
        \to (\RZ )^{r_1+r_2-1}\to 0.
    \end{align}
    The left-hand term is isomorphic to 
    $\{ \pm 1\}^{r_1-1}\times (S^1)^{r_2}$ if $r_1\neq 0$
    and 
    $(S^1)^{r_2}$ if $r_1=0$.

    \subsection{The neutral component of $(\KR)\baci /\OK\baci \Rpos $}\label{sec:KR-mod-OK.R-is-torus}
    From \eqref{eq:KR-mod-OK.R-is-torus} it is now clear that 
    $(\KR )\baci / (\OK\baci \Rpos )$
    is a torus (= commutative compact Lie group) of dimension $n-1$.
    As such, its neutral component 
    $((\KR )\baci / (\OK\baci \Rpos ))^0$ is isomorphic to $(S^1)^{n-1}$.
    Choose and fix any such isomorphism 
    \begin{align}\label{eq:neutral-component-of-KR}
        \left( \frac{(\KR)\baci}{\OK\baci \Rpos}
        \right)^0
        \cong (S^1)^{n-1} .
    \end{align}
    Of course there are natural ways of specifying such an isomorphism in the spirit of  \eqref{eq:KR-mod-OK.R-is-torus}
    relying only on mild choices. 
    But this is not needed in this paper.

    \subsection{The volume of $(\KR)\baci /\OK\baci \Rpos $}\label{sec:volume-of-KR-mod-OK.R}
    Since $\OK\baci $ is a discrete subgroup of %
    $(\KR)\baci /\Rpos $, and translation by an element of $\OK\baci $ preserves the measure $\mu _{\mult}$ (see \S \ref{sec:measures} for this measure), the quotient inherits the measure.
    Let us write the resulting one also by $\mu _{\mult}$. 

    It will be convenient to know the volume of $(\KR )\baci /\OK\baci \Rpos $.
    The left-hand term of \eqref{eq:KR-mod-OK.R-is-torus} has volume $2^{r_1}(2\pi )^{r_2}/w_K$.
    The right-hand term has volume $R_K \sqrt{r_1+r_2}$ (\S \ref{sec:fundamental-domain-KR}).
    By the definition of $\mu _{\mult}$ as the product measure, it follows that 
    \begin{align}\label{eq:volume-of-KR-mod-OK.R}
        \vol _{\mult} ((\KR)\baci /\OK\baci \Rpos ) = \frac{2^{r_1}(2\pi )^{r_2}R_K \sqrt{r_1+r_2}}{w_K}
    .\end{align}

    \section{Fourier analysis on a torus}\label{sec:Fourier}

    By a {\it torus} we mean a compact {\it commutative} Lie group.
    By so-called Lie's three theorems, 
    such a group $\T $ is known %
    to be isomorphic to a product 
    \begin{align}\label{eq:torus-is-a-product}
        \T \cong G\times (S^1)^d
    \end{align}
    of a finite {\it commutative} group $G$ and a connected torus $(S^1)^d$.
    Of course the isomorphism 
    is usually non-canonical.

    \subsection{Approximation of indicator functions of small convex subsets by characters}
    Let $\T $ be a torus and 
    \begin{align}\label{eq:def-Pontryagin-dual}
        \hatT := \Hom _{\text{cont}}(\T ,S^1)\cong \widehat G \times \Z ^d 
    \end{align}
    be its Pontryagin dual.
    Every $\xi \in \hatT$ will be seen as a $\mathbf{C} $-valued function on $\T $
    by $\T \xrightarrow{\xi }S^1 \subset \mathbf{C} $.
    Fourier analysis on $\T $ roughly says that functions on $\T $ can be written as $\mathbf{C} $-linear combinations of elements of $\hatT $.
    The precise statement we use in this paper is Proposition \ref{prop:fact-from-Fourier} below.
    
    \begin{definition}\label{def:small-convex-subsets}
        Let $\T $ be a torus whose neutral component $\T ^0$ is equipped with an isomorphism $\T ^0\cong (S^1)^d$.
        A {\it small convex subset} of $\T $ (resp.\ of size $\le \delta $, $0<\delta <1/4$) is a connected subset $P\subset \T $ such that for some $p\in P$ the translate 
        \begin{align}
            P-p \subset \T ^0 \cong (S^1)^d\cong (\RZ )^d
        \end{align}
        is the projection of an open convex subset $\widetilde P\subset [-\frac{1} 4 , \frac{1} 4]^d \subset \R ^d$ (resp.\ $\widetilde P \subset [-\delta ,\delta ]^d$). 
    \end{definition}
    
    \begin{proposition}\label{prop:fact-from-Fourier}
        Let $M,Y>1$ be positive parameters. Let $P$ be a small convex subset of a torus of size $<1/5$ with respect to some identification $\T ^0 \cong (S^1)^d= \R ^d /\Z ^d$.
        Then the indicator function $1_P$ can be approximated as:
        \begin{align}\label{eq:fact-from-Fourier}
            1_P &= \sum _{\xi \in \Xi }
            c_\xi \cdot \xi 
            + G + H ,
        \end{align}
        where 
        \begin{itemize}
        \item $\Xi \subset \hatT $ is a subset of size $O_d( \#\pi_0(\T )\cdot Y^d)$,
        \item the coefficients $c_\xi %
        \in \mathbf{C} $ obey bounds $|c_\xi |\le 1$, %
        \item the values of $G$ have absolute values $\le 1$ and there is another small convex set $P'$ satisfying $\oname{Supp}(G)\subset P'\setminus P$ and $\oname{Vol}(P'\setminus P)=O_n(1/M)$,
        \item $H$ %
        is a function satisfying $\lnorm H _\infty %
        \le O_d(M\cdot \frac{\log Y}{Y})$. 
        \end{itemize}
    
        Moreover, for $\xi\in \left( \pi _0(\T )\right) \widehat{\ } \subset \hatT $ the coefficient $c_{\xi }$ can be explicitly described:
        letting $\mu $ be any Haar measure on $\T $, 
        \begin{align}
            c_\xi 
            = \frac{\mu (P)}{\mu (\T )} \ol{\xi (P)}
            +O_d(1 /M),
        \end{align}
        where the value $\xi (P)\in \mathbf{C} $ is well defined because $\xi \in \left( \pi _0(\T )\right) \widehat{\ }$ is constant on every connected set.
    \end{proposition}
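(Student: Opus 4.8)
The plan is to reduce to the neutral component and then run a standard Fej\'er smoothing. Fix an isomorphism $\T\cong\Gamma\times(S^1)^d$ as in \eqref{eq:torus-is-a-product}, with $\Gamma=\pi_0(\T)$ and the factor $(S^1)^d$ the given identification of $\T^0$, so that $\hatT\cong\widehat\Gamma\times\Z^d$ (cf.\ \eqref{eq:def-Pontryagin-dual}). Since $P$ is connected it lies in a single coset $\{g_0\}\times(S^1)^d$, and by Definition \ref{def:small-convex-subsets} the translate $P-p$ is the image of a convex set $\widetilde P\subset[-\tfrac15,\tfrac15]^d\subset\R^d$; choosing coordinates so that $1_P(g,x)=1_{\{g=g_0\}}\cdot 1_{\widetilde P}(x)$, the finite factor $1_{\{g=g_0\}}$ is exactly $\tfrac{1}{\#\Gamma}\sum_{\chi\in\widehat\Gamma}\overline{\chi(g_0)}\,\chi$, a sum of $\#\Gamma=\#\pi_0(\T)$ characters of modulus $\le1$. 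Hence it is enough to approximate $1_{\widetilde P}$ on $(S^1)^d$ by a trigonometric polynomial with frequencies in $[-Y,Y]^d$ (of which there are $O_d(Y^d)$), coefficients of modulus $\le\vol(\widetilde P)\le1$, and the error splitting required in \eqref{eq:fact-from-Fourier}; tensoring the two approximations then yields $\Xi\subset\hatT$ of size $O_d(\#\pi_0(\T)\,Y^d)$ and $|c_\xi|\le1$. We may assume $M$ and $Y$ larger than a fixed constant, the remaining range being vacuous or absorbable into the $O_d$-constants.

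Next I would manufacture a Lipschitz function sandwiching $1_{\widetilde P}$. Put $\varepsilon:=2/M$ and let $\widetilde{P'}$ be the $\varepsilon$-thickening of $\widetilde P$ (its Minkowski sum with the closed $\varepsilon$-ball): it is convex, of diameter $O_d(1)$, and for $M$ large enough still contained in $[-\tfrac14,\tfrac14]^d$, hence determines a legitimate small convex set $P'$ on $\T$. Set $f(x):=\max\bigl(0,\,1-\tfrac{M}{2}\oname{dist}(x,\widetilde P)\bigr)$, so that $0\le f\le1$, $f$ is $\tfrac{M}{2}$-Lipschitz, $f\equiv1$ on $\widetilde P$, and $\{f>0\}=\widetilde{P'}$; thus $1_{\widetilde P}\le f\le1_{\widetilde{P'}}$. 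Define $G:=1_{\{g=g_0\}}(1_{\widetilde P}-f)$; then $|G|\le1$, $\oname{Supp}(G)\subset\{g_0\}\times(\widetilde{P'}\setminus\widetilde P)=P'\setminus P$, and by the Steiner formula for the convex body $\widetilde P$ (whose perimeter is $O_d(1)$, being contained in a fixed box) one gets $\vol(P'\setminus P)=O_d(\varepsilon)=O_d(1/M)$, which is the desired $O_n(1/M)$.

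For the polynomial I would take $(g,x)\mapsto1_{\{g=g_0\}}(f\ast K_Y)(x)$, where $K_Y(x)=\prod_{i=1}^dF_Y(x_i)$ is the product of one-variable Fej\'er kernels of degree $\lfloor Y\rfloor$: so $K_Y\ge0$, $\int K_Y=1$, and $\widehat{K_Y}(m)\in[0,1]$ equals $1$ at $m=0$ and vanishes unless $\lnorm{m}_\infty<Y$. Its frequencies are the pairs $(\chi,m)$, $\chi\in\widehat\Gamma$, $\lnorm{m}_\infty<Y$, which gives the asserted size of $\Xi$, while its coefficients $c_{(\chi,m)}=\tfrac{\overline{\chi(g_0)}}{\#\Gamma}\,\widehat f(m)\,\widehat{K_Y}(m)$ have modulus $\le\vol(\widetilde{P'})\le1$. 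For the error I would use the Lipschitz bound in the usual way: $(f-f\ast K_Y)(x)=\int(f(x)-f(x-t))K_Y(t)\,dt$ with $|f(x)-f(x-t)|\le\tfrac{M}{2}\sum_i|t_i|$, which by the product structure of $K_Y$ reduces to the classical sharp one-variable estimate $\int|s|\,F_Y(s)\,ds\asymp\log Y/Y$ (the bulk contribution comes from $|s|$ of order $1/Y$, of size $\asymp1/Y$, and from the tail where $F_Y(s)\asymp1/(Ys^2)$, of size $\asymp\log Y/Y$); hence $H:=1_{\{g=g_0\}}(f-f\ast K_Y)$ obeys $\lnorm{H}_\infty=O_d(M\log Y/Y)$. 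Since $1_P=1_{\{g=g_0\}}f+G$ and $1_{\{g=g_0\}}f=1_{\{g=g_0\}}(f\ast K_Y)+H$, this is the decomposition \eqref{eq:fact-from-Fourier}.

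Finally, for $\xi\in\widehat{\pi_0(\T)}\subset\hatT$, i.e.\ $\xi=(\chi,0)$ with $\widehat{K_Y}(0)=1$, we have $c_\xi=\tfrac{\overline{\chi(g_0)}}{\#\Gamma}\widehat f(0)$; using $\widehat f(0)=\vol(\widetilde P)+\int(f-1_{\widetilde P})$ with $\bigl|\int(f-1_{\widetilde P})\bigr|\le\vol(\widetilde{P'}\setminus\widetilde P)=O_d(1/M)$, together with $\xi(P)=\chi(g_0)$ and $\mu(P)/\mu(\T)=\vol(\widetilde P)/\#\Gamma$ for any Haar measure $\mu$ (the ratio being independent of the normalization), this equals $\tfrac{\mu(P)}{\mu(\T)}\,\overline{\xi(P)}+O_d(1/M)$. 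The genuinely delicate point is the error rate: to reach $M\log Y/Y$, rather than a larger power of $M$, one must smooth at scale exactly $1/M$ --- with a merely Lipschitz, not highly differentiable, bump --- and invoke the sharp Fej\'er-tail bound $\int|s|F_Y(s)\,ds\asymp\log Y/Y$; one must also check that every implied constant is uniform in $d$ and, crucially, independent of the torus $\T$ and of $P$. The remainder --- the reduction to $\T^0$, the Steiner-type volume estimate, and matching the non-canonical splitting $\hatT\cong\widehat\Gamma\times\Z^d$ against the canonical subgroup $\widehat{\pi_0(\T)}$ --- is routine.
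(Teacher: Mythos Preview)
Your argument is correct: the Lipschitz tent $f$, the Fej\'er convolution, the finite Fourier expansion over $\widehat\Gamma$, and the sharp tail bound $\int |s|F_Y(s)\,ds\asymp (\log Y)/Y$ combine exactly as you describe, and your identification of $c_\xi$ for $\xi\in\widehat{\pi_0(\T)}$ is the right one. The paper itself does not argue from scratch but simply invokes \cite[Corollary~A.3]{LinearEquations} and \cite[Lemma~A.9]{QuadraticUniformity}, together with the remark that the non-connected case follows from the splitting $\T\cong G\times(S^1)^d$; what you have written is essentially a clean reconstruction of the content of those lemmas, so the underlying mathematics is the same. The gain of your version is self-containedness and explicit constants; the paper's citation is shorter and defers the analytic verification (Steiner-type volume bound, Fej\'er tail estimate) to Green--Tao. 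One cosmetic point: your phrase ``uniform in $d$'' should read ``depending only on $d$''---the bounds are $O_d$, so they do depend on the dimension, but are uniform in $\T$, $P$, $M$, $Y$.
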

    \begin{proof}
        This follows from \cite[Corollary A.3]{LinearEquations} and \cite[Lemma A.9]{QuadraticUniformity}.
    
        Note that although \cite[Lemma A.9]{QuadraticUniformity} is stated for the connected torus $(\RZ )^d$, the corresponding statement for a non-connected commutative torus easily follows from the decompositions \eqref{eq:torus-is-a-product} \eqref{eq:def-Pontryagin-dual}.
        The specific choice of these decompositions affect 
        the resulting objects $\Xi ,G, H,\dots $ but 
        do not affect 
        the bounds of the error terms.
    \end{proof}
    
    In our application, the number $\# \pi_0(\T )$ of connected components of $\T $ will be allowed to grow at a pseudopolynomial rate $e^{\sqrt{\log N} / O_K(1)}$ at the fastest.
    
    \section{Id{\`e}le class group as a torus}\label{sec:idele-class-group-as-a-torus}

    Let $\idealq = \prod _{\idealp } \idealp ^{n_\idealp }$ be a non-zero ideal 
    and let $C(\idealq )$ be the mod $\idealq $ \idele class group from \S \ref{sec:Hecke}.
    We perform
    Fourier analysis from \S \ref{sec:Fourier}  on the torus $C(\idealq )/\R\baci_{>0}$. 
    Its volume \eqref{eq:volume-of-idele-class-group} explains the coefficient in Mitsui's Prime Number Theorem \ref{thm:Mitsui}, \ref{thm:main-theorem}.

    \subsection{The structure of $C(\idealq  )/\Rpos $}
    In this subsection we review how the quotient $C(\idealq )/\R\baci_{>0}$ is a real torus and fix some notation on the way.
    
    \subsubsection{To factor out $\ClK $}
    Recall our notation
    \begin{align}
        C(\idealq )=\widetilde C(\idealq )/K\baci 
        = \left((\KR)\baci \times 
        \bigoplus _{\idealp | \idealq }
        K\baci / (1+\idealp ^{n_{\idealp}} \calO _{K,\idealp } )
        \times \dsum _{\idealp \notdivide \idealq  } \Z \right) / K\baci.
    \end{align}
    Consider the $\idealp $-adic valuation map 
    $v_\idealp \colon K\baci / (1+\idealp ^{n_{\idealp}} \calO _{K,\idealp } ) \surj \Z $
    for each $\idealp \divides \idealq  $. 
    Its kernel is identified with $\calO _{K,\idealp }\baci/ (1+\idealp ^{n_\idealp }\calO _{K,\idealp }) = (\calO _{K,\idealp } / \idealp ^{n_\idealp }\calO _{K,\idealp })\baci = (\calO _{K} / \idealp ^{n_\idealp })\baci$.
    If we take the direct sum over $\idealp \divides \idealq  $, by Chinese Remainder Theorem we get $(\OK / \idealq  )\baci$.
    
    It follows that we have the following commutative diagram with exact rows and injective vertical maps, where $P_K$ denotes the group of principal fractional ideals
    \begin{align}
        \xymatrix{
            1 \ar[r]
            &\OK \baci \ar[d]\ar[r]
            &K\baci \ar[d]\ar[r]
            &P_K \ar[d]\ar[r]
            &1
            \\
            1 \ar[r]
            &\frac{(\KR)\baci}{\Rpos} \times (\OK /\idealq )\baci \ar[r]
            &\widetilde C(\idealq  )/\Rpos \ar[r]_{(v_\idealp )_{\idealp }} \ar[r]
            &\dsum\limits _{\substack{
                \idealp \text{ non-zero} \\  
                \text{prime ideals}
                }} 
                \Z
            \ar[r]
            &0.
        }
    \end{align}
    By the snake lemma we obtain an exact sequence
    \begin{align}\label{eq:decomp-idele-class-group}
        1\to \frac{ \frac{(\KR)\baci}{\Rpos} \times (\OK /\idealq )\baci}{\OK\baci}\to C(\idealq  ) \to \ClK \to 0.
    \end{align}
    
    \subsubsection{To factor out $(\OK/\idealq  )\baci $}
    We want to compute the first term of \eqref{eq:decomp-idele-class-group} further.
    Consider the commutative diagram with exact rows:
    \begin{align}\label{eq:Now-consider}
        \xymatrix{
            &1\ar[r] \ar[d]
            &\OK\baci \ar[r]^{=} \ar[d]
            &\OK \baci\ar[r] \ar[d]
            &1
            \\ 
            1\ar[r] 
            &(\OK /\idealq  )\baci\ar[r] 
            &\frac{(\KR)\baci}{\Rpos} \times (\OK /\idealq )\baci \ar[r] 
            &\frac{(\KR)\baci}{\Rpos } \ar[r] 
            &0 .
        }
    \end{align}
    As the vertical maps are injective, 
    we conclude that $( \frac{(\KR)\baci}{\Rpos} \times (\OK /\idealq )\baci)/ \OK\baci$ is an extension 
    of $\frac{(\KR)\baci}{\Rpos\cdot\OK\baci} $ by $(\OK /\idealq  )\baci$,
    the first of which has been computed in \S \ref{sec:we-take-this-opportunity}.

    \subsubsection{The structure of $C(\idealq  )/\Rpos $---conclusion}
    Our computation so far expresses $C(\idealq  )/\Rpos $ as a twofold extension
    \begin{align}\label{eq:twofold-extension}
        \xymatrix{
            &1 \ar[d] && 
            \\ 
            &(\OK /\idealq  )\baci\ar[d]&&&
            \\
            1\ar[r]
            &( \frac{(\KR)\baci}{\Rpos} \times (\OK /\idealq )\baci)/ \OK\baci \ar[r]\ar[d] 
            & C(\idealq ) / \Rpos \ar[r] 
            &\ClK \ar[r]
            & 0.
            \\ 
            &\frac{(\KR)\baci}{\OK\baci\cdot\Rpos }\ar[d]&&&
            \\
            &1&&&
        }
    \end{align}
    In \S \ref{sec:we-take-this-opportunity} we saw that $\frac{(\KR)\baci}{\OK\baci\cdot\Rpos }$ is a torus.
    It follows that $C(\idealq  )/\Rpos $ is also a torus.
    
    Its neutral component 
    $(C(\idealq  )/\Rpos )^0$
    surjects to the neutral component of 
    $\frac{(\KR)\baci}{\OK\baci\cdot\Rpos }$,
    and the size of the kernel of this map 
    is bounded from above by $\# (\OK /\idealq  )\baci = \totient (\idealq  ) < \Nrm (\idealq  )$.

    \subsubsection{The volume of $C(\idealq  )/\Rpos $}\label{sec:volume-of-idele-class-group}
    Via the diagram \eqref{eq:twofold-extension},
    the volume form of $\frac{(\KR)\baci}{\OK\baci\cdot\Rpos }$ induces 
    a volume form on $C(\idealq  )/\Rpos $.
    Write the resulting measure as $\mu_{\midele }$ and $\oname{Vol}_{\midele}(-)$ the volume of subsets.\footnote{
        Note that this may be different from the measure adopted in the reader's favorite reference because 
        every reference has its own convention on the chioce of measures.
        Don't worry, any two differ only by a constant factor because all are Haar measures. 
        }
    By the diagram and \eqref{eq:volume-of-KR-mod-OK.R} we have 
    \begin{align}\label{eq:volume-of-idele-class-group}
        &\vol _{\midele } (C(\idealq  )/\Rpos )
        \\ 
        = &h_K \totient (\idealq  ) \vol _{\mult}(\frac{(\KR)\baci}{\OK\baci\Rpos })
        \\ 
        =& \totient (\idealq  )
        2^{r_1}(2\pi )^{r_2} R_K h_K \sqrt{r_1+r_2}/w_K
        .
    \end{align}

    \subsection{Indicator function of convex sets}
    
    Now we combine Fourier analysis from \S \ref{sec:Fourier} and the description of $C(\idealq )/\Rpos $.
    The consequence, Proposition \ref{prop:Fourier-on-idele-class-group},
    is used when we compute weighted sum of prime elements in {\em thin cones}
    as in Figure \ref{fig:thin-cone} in Itroduction.

    \subsubsection{A trivialization of the neutral component}\label{sec:trivialization}
    
    Consider the covering map induced on the neutral components $(\mathbf{C}(\idealq  ) /\R _{>0}\baci)^0 \surj \left( \frac{(\KR)\baci}{\OK\baci\cdot\Rpos } \right)^0$.
    In \eqref{eq:neutral-component-of-KR} we fixed an isomorphism between the target and 
    $(S^1)^{n-1}=(\RZ )^{n-1}$. 
    This identifies the covering map with a map of the form
    \begin{align}\label{eq:cover-is-iso-to}
        \R ^{n-1}/\Lambda (\idealq  )\surj \R ^{n-1}/\Z ^{n-1} 
    \end{align} 
    for some sublattice $\Lambda (\idealq  )\subset \Z ^{n-1}$
    of index dividing $\totient (\idealq  )$.
    By $\Z $-linear algebra (Euclid's algorithm, see Proposition \ref{prop:coefficients-bounded-by-det}), one can find a basis 
    \begin{align}\label{eq:basis-bounded-coeff}
        \bm f_1,\dots ,\bm f_{n-1} \in \Lambda (\idealq  ) 
    \end{align}
    whose coefficients as elements of $\Z ^{n-1}$ have absolute values $\le \totient (\idealq  )$. 
    
    Now we are ready to state the following consequence of Fourier analysis.

    \begin{proposition}[indicator function of a small convex set]\label{prop:Fourier-on-idele-class-group}
        Let $M,Y>1$ be real numbers.
    
        Let $P\subset C(\idealq  )/\R \baci_{>0}$ be a connected open subset such that its translate to $(C(\idealq  )/\R \baci_{>0})^0$ projects injectively by the map 
        \begin{align}
            (C(\idealq  )/\R \baci_{>0})^0 
            \to 
            \left( \frac{(\KR)\baci}{\OK\baci\cdot\Rpos } \right)^0
            \cong 
            (S^1)^{n-1}
        \end{align}
        and the image is
        a small convex subset of size $\ll _n \totient (\idealq  )^{-(n-2)}$ 
        in the sense of Definition \ref{def:small-convex-subsets}.

        Then one has a decomposition of the indicator function 
        $1_P\colon C(\idealq  )/\R \baci_{>0}\to \mathbf{C} 
        $
        of the form:
        \begin{align}
            1_P &= \sum _{\psi \in \Xi \subset (C(\idealq  )/\R \baci_{>0})\widehat{\ } }
            c_\psi \psi 
            +G+H,
        \end{align}
        where 
        \begin{itemize}
            \item 
            $\# \Xi = O_K (\totient (\idealq  ) Y^{n-1}) $,
            \item 
            $|c_\xi | \le 1$, %
            \item the values of $G$ have absolute value $\le 1$, and there is another small convex set $P'$ such that 
            $\oname{Supp} (G)\subset P'\setminus P$ and $\oname{Vol}_{\midele } (P'\setminus P)=O_n(1/M)$, 
            \item 
            $\lnorm H_\infty %
            \le O_n(M\frac{\log Y}{Y})$.
        \end{itemize}
        Moreover, for $\psi \in \left( \pi _0(C(\idealq  )/\R \baci_{>0})\right) \widehat{\ }\subset (C(\idealq  )/\R \baci_{>0})\widehat{\ } $
        we have 
        \begin{align}
            c_\psi = \frac{\vol _{\midele }(P)}{\vol _{\midele }(C(\idealq  )/\R \baci_{>0})} \ol {\psi (P)}+
            O_n(1/M).
        \end{align}
    \end{proposition}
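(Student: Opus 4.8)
The plan is to deduce this from Proposition \ref{prop:fact-from-Fourier}, applied to the torus $\T := C(\idealq )/\Rpos $ --- which is a torus of dimension $d = n-1$ by the twofold extension \eqref{eq:twofold-extension} --- with the Haar measure $\mu = \mu _{\midele }$. Three things have to be checked: (i) $P$ is a \emph{small convex subset} of $\T $ of size $<1/5$ for a suitable identification $\T ^0 \cong (S^1)^{n-1}$; (ii) $\#\pi _0(\T ) = O_K(\totient (\idealq ))$, so that the set $\Xi $ produced by Proposition \ref{prop:fact-from-Fourier} satisfies $\#\Xi = O_{n-1}\!\left(\#\pi _0(\T )\, Y^{n-1}\right) = O_K(\totient (\idealq )Y^{n-1})$; and (iii) the push-forward of $\mu _{\midele }$ to $(S^1)^{n-1}$ is comparable, up to a harmless factor, to the unit-mass Haar measure. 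Granting (i)--(iii), the bulleted bounds and the formula for $c_\psi $ are exactly the output of Proposition \ref{prop:fact-from-Fourier} (the last one since $\mu _{\midele }$ is a Haar measure and $\psi $ is constant on the connected set $P$, so the ratio $\vol _{\midele }(P)/\vol _{\midele }(\T )$ is normalization-free).

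The heart of the matter is (i), and this is where the exponent $n-2$ in the size hypothesis is used. I would use the trivialization of \S \ref{sec:trivialization}: the isomorphism \eqref{eq:neutral-component-of-KR} identifies $\left((\KR )\baci /(\OK\baci \Rpos )\right)^0$ with $\R ^{n-1}/\Z ^{n-1}$, and \eqref{eq:cover-is-iso-to} then identifies $\T ^0$ with $\R ^{n-1}/\Lambda (\idealq )$ for a sublattice $\Lambda (\idealq )\subseteq \Z ^{n-1}$ of index dividing $\totient (\idealq )$, the covering map being the obvious projection. Take the basis $\bm f_1,\dots ,\bm f_{n-1}$ of $\Lambda (\idealq )$ from \eqref{eq:basis-bounded-coeff}, whose coordinate vectors in $\Z ^{n-1}$ have entries of absolute value $\le \totient (\idealq )$ by Proposition \ref{prop:coefficients-bounded-by-det}, and let $F$ be the $(n-1)\times (n-1)$ integer matrix with these columns; the resulting chart $\T ^0 \cong (S^1)^{n-1} = \R ^{n-1}/\Z ^{n-1}$ is locally the linear map $F\inv $. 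By the injectivity hypothesis a translate of $P$ lifts to a convex set $\widetilde P$ lying, in the $\Z ^{n-1}$-coordinates, in $[-\delta ,\delta ]^{n-1}$ with $\delta \ll _n \totient (\idealq )^{-(n-2)}$; in the new chart it becomes the convex set $F\inv \widetilde P$. Since $F\inv = (\det F)\inv \oname{adj}(F)$ with $|\det F| = [\Z ^{n-1}:\Lambda (\idealq )]\ge 1$ and each entry of $\oname{adj}(F)$ a signed $(n-2)\times (n-2)$ minor of $F$, hence of absolute value $\le (n-2)!\,\totient (\idealq )^{n-2}$, the set $F\inv \widetilde P$ is contained in a cube of side $\ll _n \totient (\idealq )^{n-2}\delta \ll _n 1$; choosing the implied constant in the hypothesis small enough in terms of $n$ makes this $<1/5$, which proves (i).

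For (ii), the exact sequences \eqref{eq:decomp-idele-class-group}, \eqref{eq:twofold-extension} and \eqref{eq:KR-mod-OK.R-is-torus} give $\#\pi _0(\T )\le h_K\cdot 2^{r_1}\cdot \#(\OK /\idealq )\baci = O_K(\totient (\idealq ))$. For (iii), pushing $\mu _{\midele }|_{\T ^0}$ forward by the chart $F\inv $ produces a translation-invariant measure on $(S^1)^{n-1}$ whose total mass is $[\Z ^{n-1}:\Lambda (\idealq )]$ times $\vol _{\mult }$ of the neutral component of $(\KR )\baci /(\OK\baci \Rpos )$ (using \S \ref{sec:volume-of-idele-class-group} and \eqref{eq:cover-is-iso-to}), i.e.\ $[\Z ^{n-1}:\Lambda (\idealq )]\cdot O_K(1)$ times the unit-mass Haar measure; as $[\Z ^{n-1}:\Lambda (\idealq )]\le \totient (\idealq )$ this comparison factor is pseudopolynomially bounded, so it is absorbed in the $O$-constants (and, in the later applications, into the choice of the auxiliary parameter $M$). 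Feeding (i)--(iii) into Proposition \ref{prop:fact-from-Fourier} then gives the proposition.

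The step I expect to be the genuine obstacle is (i): one has to weigh the $\totient (\idealq )^{-(n-2)}$ smallness built into the hypothesis against the $\totient (\idealq )^{n-2}$ blow-up of the entries of $F\inv $ --- the inverse of a matrix whose entries, supplied by Euclid's algorithm, are only $O(\totient (\idealq ))$. The remaining points --- the $\pi _0$-count via the structural sequences of \S \ref{sec:idele-class-group-as-a-torus}, and the measure normalizations of \S \ref{sec:measures} and \S \ref{sec:volume-of-idele-class-group} --- are routine bookkeeping.
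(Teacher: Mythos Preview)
Your proof takes the same route as the paper: identify $\T^0$ with $\R^{n-1}/\Lambda(\idealq)$ via \eqref{eq:cover-is-iso-to}, use the basis \eqref{eq:basis-bounded-coeff} to produce a chart $\T^0\cong(S^1)^{n-1}$ in which $P$ has size $<1/5$ (your adjugate bound $\lnorm{F\inv}_\infty\ll_n\totient(\idealq)^{n-2}$ is exactly the paper's observation that $\phi_\R\inv$ has entries $\ll_n\totient(\idealq)^{n-2}$), then apply Proposition~\ref{prop:fact-from-Fourier} and bound $\#\pi_0(\T)$ via \eqref{eq:twofold-extension} just as in your (ii). Your point (iii) is extra care the paper does not take --- it simply writes ``Proposition~\ref{prop:fact-from-Fourier} applies and gives the desired result'' without discussing the passage from chart-Lebesgue measure to $\mu_{\midele}$; you are right that this conversion costs a factor $[\Z^{n-1}:\Lambda(\idealq)]\cdot O_K(1)\le O_K(\totient(\idealq))$ in the $\vol_{\midele}(P'\setminus P)$ bound, which is harmless only at the level of the applications in \S\ref{sec:choosing-Y-and-M} (as you note), not for the proposition exactly as stated.
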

    \begin{proof}
        We continue to identify $(C(\idealq  )/\R\baci_{>0})^0$ with $\R ^{n-1}/\Lambda (\idealq  )$ as in \eqref{eq:cover-is-iso-to}.
        Take a basis of $\Lambda (\idealq  )$ as in \eqref{eq:basis-bounded-coeff}.
        For this basis the matrix representing the linear map 
        $\phi\colon \Z ^{n-1}\cong \Lambda (\idealq  )\inj \Z ^{n-1}$ has entries of absolute values $\le \totient (\idealq  )$.
        It follows that the $\R $-linear inverse map $\phi_\R\inv \colon \R ^{n-1} \to \Lambda (\idealq  )\otimes _\Z \R \cong \R ^{n-1}$
        has coefficients $\ll _n \totient (\idealq  )^{n-2}$.
        It follows that if the constant $0<c_n<1$ is chosen small enough to match the implied constant, the inverse map $\phi_\R\inv $ sends the cube 
        \begin{align}
            [-c_n\totient (\idealq  )^{-(n-2)},c_n\totient (\idealq  )^{-(n-2)} ]^{n-1} 
        \end{align}
        into $(-1/5,1/5)^{n-1}$.
    
        Under such a choice of $\Lambda (\idealq  )\cong \Z ^{n-1}$ and $0<c_n<1$ we see that $P$ is a small convex set of size $<1/5$ in $C(\idealq  )/\R \baci_{>0}$ with respect to the corresponding trivialization $(C(\idealq  )/\R \baci_{>0})^0 \cong (S^1)^{n-1}$.
    
        Therefore Proposition \ref{prop:fact-from-Fourier} applies and gives the desired result.
        Note that by \eqref{eq:twofold-extension} 
        \begin{align}
            \# \pi _0(C(\idealq  )/\Rpos )\le h_K \cdot \totient (\idealq  ) \cdot \#\pi _0\left( \frac{(\KR)\baci}{\OK\baci\cdot\Rpos } \right) 
        \end{align}
        and $\#\pi _0\left( \frac{(\KR)\baci}{\OK\baci\cdot\Rpos } \right) \le 2^{r_1}=O_n(1)$.
    \end{proof}

    \subsubsection{A chart}
    To produce connected open sets $P$ as in Proposition \ref{prop:Fourier-on-idele-class-group}, it is useful to take the following kind of chart.
    
    To state it, for non-zero ideals $\ideala ,\idealq  \subset \OK $ let us denote by 
    $(\ideala /\idealq  \ideala )\baci $ the set of elements which is a generator of $\ideala /\idealq  \ideala $ as an $\OK /\idealq  $-module.
    For $\OK =\ideala $, this recovers the set of invertible elements $(\OK /\idealq  )\baci $.
    Note that by Chinese Remainder Theorem 
    $\ideala /\idealq  \ideala  = \dsum _{\idealp\divides\idealq  }\ideala /\idealp ^{n_\idealp }\ideala $,
    the condition that $\alpha \in \ideala /\idealq  \ideala $ is in $(\ideala /\idealq  \ideala )\baci $ is equivalent to the condition that 
    its image in $\ideala /\idealp ^{n_\idealp }\ideala $ is in $(\ideala /\idealp ^{n_\idealp }\ideala )\baci $ for all $\idealp \divides \idealq  $.
    
    Let $v_\idealp (\ideala ) \in \Z $ be the $\idealp $-adic valuation of a non-zero (fractional) ideal $\ideala $.
        Let us quickly recall its definition: the scalar extension $\ideala \calO _{K,\idealp }$ is a free $\calO _{K,\idealp }$-module of rank $1$. Take any generator $\alpha $ of $\ideala \calO _{K,\idealp }$ and we write $v_\idealp (\ideala ):= v_\idealp (\alpha )$, which is independent of the choice of $\alpha $.
    
        Then $(\ideala /\idealq  \ideala )\baci $ is equivalently defined as the set of residue classes $\ol\alpha \in \ideala /\idealq  \ideala $ such that some (or any) lift $\alpha \in \ideala $ satisfies 
        \begin{align}\label{eq:valuative-criterion}
            v_\idealp (\alpha ) = v_\idealp (\ideala )  
            \text{ for all }\idealp\divides\idealq  .
        \end{align}
        For each $\idealp $, this condition does not depend on the specific lift $\alpha $.

    Consider the map of sets
    \begin{align}\label{eq:chart-map}
        (\ideala /\idealq \ideala )\baci \to 
        \left( \dsum _{\idealp \divides \idealq  } K\baci / (1+\idealp ^{n_\idealp }\calO _{K,\idealp }) \right)
        \oplus 
        \dsum _{\idealp\notdivide\idealq  } \Z 
    \end{align}
    induced by $\ideala\setminus \{ 0\}   \inj K\baci $ 
    composed with $K\baci \surj K\baci / (1+\idealp ^{n_\idealp }\calO _{K,\idealp }) $ for each $\idealp\divides\idealq  $
    and 
    the valuation $v_\idealp \colon K\baci \surj \Z $ for each $\idealp\notdivide\idealq  $.
    The map \eqref{eq:chart-map} is well defined on $(\ideala /\idealq  \ideala )\baci $ because if 
    $\alpha _1,\alpha _2 \in \ideala $ are two elements satisfying 
    \eqref{eq:valuative-criterion}
    such that $\varepsilon := \alpha _1-\alpha _2\in \idealq  \ideala $,
    one can verify $\alpha _1/\alpha _2 \in 1+\idealp ^{n_\idealp }\calO_{K,\idealp }$ for $\idealp\divides\idealq  $.
    Indeed, we have 
    $(\alpha _1/\alpha _2)-1 = \varepsilon /\alpha _2 $
    and 
    \begin{align}
        v_\idealp (\varepsilon /\alpha _2)\ge (v_\idealp (\idealq  )+v_\idealp (\ideala ))-v_\idealp (\ideala ) = v_\idealp (\idealq  ) = n_\idealp .
    \end{align}

    \begin{proposition}\label{prop:chart}
        Let $\mathcal D  \subset (\KR )\baci$ be a fundamental domain for the $\OK \baci$-action and 
        $\{ \ideala _{\lambda } \} _{\lambda\in\ClK }$ be a complete set of representatives.

        The inclusions $\ideala_\lambda \setminus \{ 0\}  \inj K\baci $ induce the following bijections of sets 
        \begin{align}
            C(\idealq  ) \cong 
            \dunion _{\lambda\in\ClK}
            \frac{(\KR)\baci \times (\ideala_\lambda /\idealq  \ideala_\lambda )\baci }{\OK\baci }
            \cong 
            \dunion _{\lambda\in\ClK } \mathcal D  \times (\ideala_\lambda /\idealq  \ideala_\lambda )\baci.
        \end{align}
    \end{proposition}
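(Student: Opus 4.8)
The plan is to establish the two bijections separately, using the middle term $\dunion_{\lambda}\bigl((\KR)\baci\times(\ideala_\lambda/\idealq\ideala_\lambda)\baci\bigr)/\OK\baci$ as a waypoint. The second (right-hand) bijection is immediate: $\OK\baci$ acts on $(\KR)\baci\times(\ideala_\lambda/\idealq\ideala_\lambda)\baci$ diagonally, by multiplication on $(\KR)\baci$ through $\OK\baci\inj K\baci\inj(\KR)\baci$ and by multiplication on $(\ideala_\lambda/\idealq\ideala_\lambda)\baci$ (a unit carries a single generator of $\ideala_\lambda/\idealq\ideala_\lambda$ to a single generator); since $\mathcal D$ is a fundamental domain for the free $\OK\baci$-action on $(\KR)\baci$, the slice $\mathcal D\times(\ideala_\lambda/\idealq\ideala_\lambda)\baci$ meets every diagonal orbit exactly once, and the bijection follows. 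I expect this step to be routine.

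For the first (left-hand) bijection I would build the natural surjection $\pi\colon C(\idealq)\surj\ClK$ sending the class of $(\xi_\infty;(\xi_\idealp)_{\idealp\divides\idealq};(n_\idealp)_{\idealp\notdivide\idealq})\in\widetilde C(\idealq)$ to the ideal class of the attached fractional ideal $\prod_{\idealp\divides\idealq}\idealp^{v_\idealp(\xi_\idealp)}\cdot\prod_{\idealp\notdivide\idealq}\idealp^{n_\idealp}$; this is well-defined because $K\baci$ maps onto the principal ideals, and it is the surjection implicit in \eqref{eq:decomp-idele-class-group}. Since $C(\idealq)=\dunion_{\lambda\in\ClK}\pi\inv(\lambda)$, it suffices to produce, for each $\lambda$, a bijection $\pi\inv(\lambda)\cong\bigl((\KR)\baci\times(\ideala_\lambda/\idealq\ideala_\lambda)\baci\bigr)/\OK\baci$ induced by $\ideala_\lambda\inj K\baci$.

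To do this, I would first note that every class in $\pi\inv(\lambda)$ is represented by some $\xi\in\widetilde C(\idealq)$ whose attached ideal is \emph{exactly} $\ideala_\lambda$: if it is $c\ideala_\lambda$ with $c\in K\baci$, replace $\xi$ by $\bdiag(c)\inv\xi$, and $c$ is unique up to $\OK\baci$. The set of such $\xi$ is $(\KR)\baci\times\prod_{\idealp\divides\idealq}\{\xi_\idealp\in K\baci/(1+\idealp^{n_\idealp}\calO_{K,\idealp}):v_\idealp(\xi_\idealp)=v_\idealp(\ideala_\lambda)\}$ — the $\Z$-coordinates being forced to $v_\idealp(\ideala_\lambda)$ — and the residual ambiguity on it is precisely the diagonal $\OK\baci$-action, so $\pi\inv(\lambda)$ is the corresponding quotient. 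The core local input is that for each $\idealp\divides\idealq$, reduction along $\ideala_\lambda\inj K\baci$ modulo $1+\idealp^{n_\idealp}\calO_{K,\idealp}$ yields an $\OK\baci$-equivariant bijection
\[
    (\ideala_\lambda/\idealp^{n_\idealp}\ideala_\lambda)\baci\ \cong\ \{\xi_\idealp\in K\baci/(1+\idealp^{n_\idealp}\calO_{K,\idealp}):v_\idealp(\xi_\idealp)=v_\idealp(\ideala_\lambda)\},
\]
since $\ideala_\lambda\calO_{K,\idealp}$ is free of rank one over the discrete valuation ring $\calO_{K,\idealp}$ with generators exactly the elements of valuation $v_\idealp(\ideala_\lambda)$, and two such agree modulo $1+\idealp^{n_\idealp}\calO_{K,\idealp}$ iff they agree modulo $\idealp^{n_\idealp}\ideala_\lambda\calO_{K,\idealp}$. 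Taking the product over $\idealp\divides\idealq$ and invoking the Chinese Remainder decomposition $\ideala_\lambda/\idealq\ideala_\lambda=\dsum_{\idealp\divides\idealq}\ideala_\lambda/\idealp^{n_\idealp}\ideala_\lambda$ turns this into $(\ideala_\lambda/\idealq\ideala_\lambda)\baci\cong\prod_{\idealp\divides\idealq}\{\cdots\}$, whence $\pi\inv(\lambda)\cong\bigl((\KR)\baci\times(\ideala_\lambda/\idealq\ideala_\lambda)\baci\bigr)/\OK\baci$, the map being the one induced by $\ideala_\lambda\inj K\baci$ as asserted.

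I do not expect a serious obstacle; the proof is a chain of elementary identifications. The step requiring the most care is the analysis of the fiber $\pi\inv(\lambda)$ — namely checking that normalizing the attached ideal to $\ideala_\lambda$ cuts the residual $K\baci$-action down to exactly $\OK\baci$, and that the surviving finite data at the primes $\idealp\divides\idealq$ is matched with $(\ideala_\lambda/\idealq\ideala_\lambda)\baci$ compatibly with that $\OK\baci$-action. The well-definedness of the reduction map (independence of the chosen lift of a class in $(\ideala_\lambda/\idealq\ideala_\lambda)\baci$) is precisely the computation recorded just after \eqref{eq:chart-map}, and everything else is bookkeeping of $\idealp$-adic valuations.
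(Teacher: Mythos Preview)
Your proposal is correct and follows essentially the same route as the paper's proof: fiber $C(\idealq)$ over $\ClK$, normalize each representative so its attached ideal is exactly $\ideala_\lambda$ (cutting the $K\baci$-ambiguity down to $\OK\baci$), identify the surviving finite data at $\idealp\divides\idealq$ with $(\ideala_\lambda/\idealq\ideala_\lambda)\baci$ via the local computation and Chinese Remainder, and use the fundamental domain $\mathcal D$ for the second bijection. The paper packages the ``normalized'' set as $\widetilde C(\idealq)_{\ideala_\lambda}=(\KR)\baci\times K_{\idealq,\ideala_\lambda}$ and carries out the same local bijection $(\ideala_\lambda/\idealp^{n_\idealp}\ideala_\lambda)\baci\cong\{x:v_\idealp(x)=v_\idealp(\ideala_\lambda)\}$ you describe; there is no substantive difference.
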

    \begin{proof}
        Recall $C(\idealq  )=\widetilde C(\idealq  )/K\baci$.
        Consider the following maps 
        \begin{align}
            \widetilde C(\idealq  )\surj \dsum _{\idealp \text{ all}} \Z \surj \ClK 
        \end{align}
        and for each $\lambda\in \ClK $ let $\widetilde C(\idealq  )_\lambda $ be the fiber of this map over $\lambda $: 
        \begin{align}
            \widetilde C(\idealq  )_\lambda := \widetilde C(\idealq  ) \times _{\ClK } \{ \lambda \} .
        \end{align}
        Then we get a decomposition $\widetilde C(\idealq  ) = \dunion _{\lambda\in\ClK } \widetilde C(\idealq  )_\lambda $.
        Since the action of $K\baci$ preserves each $\widetilde C(\idealq  )_\lambda $, we get 
        \begin{align}\label{eq:chart-1}
            C(\idealq  ) = \dunion _{\lambda\in\ClK} \widetilde C(\idealq  )_\lambda /K\baci.
        \end{align}
    
        For a non-zero (fractional) ideal $\ideala $ we consider the following constructions. We define subsets
    \begin{align}
        K_{\idealq  ,\ideala }
        &:=
        \Bigl\{ ( 
                (x_\idealp )_{\idealp \divides \idealq  } ,
                (a_\idealp )_{\idealp \notdivide \idealq  } 
            )
        \in 
            \left( 
                \dsum _{\idealp \divides \idealq  }K\baci / (1+\idealp^{n_\idealp}\calO _{K,\idealp })
                \times \dsum _{\idealp \notdivide \idealq  } \Z 
            \middle) 
            \right| \\ 
            &\hspace{3cm}
            \forall \idealp\divides\idealq  :
               v_{\idealp }(x_\idealp ) = v_\idealp (\ideala ) ,
            \quad 
            \forall \idealp\notdivide\idealq  : 
            a_\idealp = v_\idealp (\ideala )
            \Bigr\} ,
            \\ 
            \widetilde C(\idealq  )_{\ideala }
            &:=
            (\KR )\baci \times K_{\idealq  ,\ideala  } \subset \widetilde C(\idealq  )_{[\ideala ] }, \text{ in particular }
            \widetilde C(\idealq  )_{\ideala _\lambda }
            \subset \widetilde C(\idealq  )_{\lambda }.
    \end{align}

        One sees that $\widetilde C(\idealq  )_{\ideala_\lambda }$ is stable under the $\OK \baci$-action. The induced map 
        \begin{align}\label{eq:chart-2}
            \widetilde C(\idealq  )_{\ideala_\lambda }/ \OK\baci 
            \to 
            \widetilde C(\idealq  )_\lambda / K\baci
        \end{align}
        is a bijection. The inverse is described as follows:
        let $(x;(x_\idealp )_{\idealp\divides\idealq  }; (a_\idealp )_{\idealp\notdivide\idealq  } )\in \widetilde C(\idealq  )_{\lambda }$.
        By the definitions of the sets $\widetilde C(\idealq  )_{\ideala_\lambda } \subset \widetilde C(\idealq  )_\lambda $ and 
        $\ClK = \oname{cok} 
        (K\baci \to \dsum _{\idealp \text{: all}} \Z ) $ 
        there is an $\alpha \in K\baci$ such that if we let $\bdiag (\alpha )\in \widetilde C(\idealq  )$ be its diagonal image 
        \begin{align}
            \bdiag (\alpha )\cdot ( x;( x_\idealp )_{\idealp\divides\idealq  }; (a_\idealp )_{\idealp\notdivide\idealq  } )
            \in 
            \widetilde C(\idealq  )_{\ideala _\lambda }.
        \end{align}
        This gives an element of $\widetilde C(\idealq  )_{\ideala_\lambda }$
        whose class in $\widetilde C(\idealq  )_{\ideala_\lambda }/ \OK\baci$
        is independent of the choice of $\alpha \in K\baci$.
    
        Since $\mathcal D  \subset (\KR)\baci$ is a fundamental domain for the $\OK\baci$-action we see that the composite 
        \begin{align}\label{eq:chart-3}
            \mathcal D  \times 
            K_{\idealq  ,\ideala_\lambda }
            \inj 
            \widetilde C(\idealq )_{\ideala_\lambda}
            \surj 
            \widetilde C(\idealq  )_{\ideala_\lambda}/\OK \baci
        \end{align}
        is a bijection.
    
        In view of the bijections \eqref{eq:chart-1} \eqref{eq:chart-2} \eqref{eq:chart-3},
        it remains to verify that the map \eqref{eq:chart-map}
        induces a bijection 
        \begin{align}
            (\ideala_\lambda /\idealq  \ideala_\lambda )\baci
            \isoto 
            K_{\idealq  ,\ideala _\lambda }
             .
        \end{align}
        Both sides naturally decompose into products $\prod _{\idealp\divides \idealq  }$.
        It suffices to show the following bijection %
        \begin{align}
            (\ideala_\lambda /\idealp ^{n_\idealp } \ideala_\lambda )\baci
            \isoto  
            \{ x\in K\baci /(1+\idealp ^{n_\idealp }\calO _{K,\idealp }) 
            \mid v_\idealp (x)=v_\idealp (\ideala_\lambda ) \} 
        \end{align}
        for each $\idealp\divides \idealq  $.
        
        The inverse map can be constructed as follows.
        Let $\alpha \in K\baci$ be an element satisfying $v_\idealp (\alpha )= v_\idealp (\ideala _\lambda )$.
        This implies $\alpha \in \ideala _\lambda \calO _{K,\idealp }$ and that it generates $\ideala _\lambda \calO _{K,\idealp }$ as an $\calO _{K,\idealp }$-module. So it determines a class in 
        $(\ideala _\lambda \calO _{K,\idealp }/ \idealp ^{n_\idealp }\ideala _\lambda \calO _{K,\idealp })\baci = (\ideala_\lambda /\idealp ^{n_\idealp } \ideala_\lambda )\baci$.
        Moreover, it is obvious that if the ratio of two such $\alpha ,\alpha '$ is in $1+\idealp ^{n_\idealp }\calO _{K,\idealp }$, 
        their difference is in $\idealp ^{n_\idealp }\ideala_\lambda \calO _{K,\idealp }$
        It is routine to check that this is actually the inverse. %
    \end{proof}

\section{Euclid's algorithm}\label{sec:Euclid}

The following statement is used in \S \ref{sec:trivialization}.

\newtheorem{propositionsubsec}[subsection]{Proposition}
\begin{propositionsubsec}\label{prop:coefficients-bounded-by-det}
    Let $\Lambda \subset \Z^{n}$ be a subgroup of finite index $\# (\Z ^n / \Lambda ) =D$.
    Then there is a basis $\bm v_1,\dots ,\bm v_{n}$ of $\Lambda $ whose entries have absolute values $\le D$.
\end{propositionsubsec}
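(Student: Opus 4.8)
The plan is to build a basis in (row) Hermite normal form and read off the bound from the identity ``product of pivots $=$ index''. Since $D=\#(\Z^n/\Lambda)$ is finite, $\Lambda$ has full rank $n$, so a triangular basis exists. I would argue by induction on $n$, the case $n=1$ being immediate since then $\Lambda=D\Z$ has basis $\{D\}$.

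For the inductive step, let $\pi\colon\Z^n\to\Z$ be projection onto the first coordinate. Its image on $\Lambda$ is a nonzero subgroup $d_1\Z$ (nonzero by full rank), and I pick $\mathbf v_1\in\Lambda$ with $\pi(\mathbf v_1)=d_1$. Setting $\Lambda_0:=\Lambda\cap(\{0\}\times\Z^{n-1})$, every $\mathbf x\in\Lambda$ is uniquely $k\mathbf v_1+\mathbf x_0$ with $k\in\Z$ and $\mathbf x_0\in\Lambda_0$ (choose $k$ so $\pi(\mathbf x)=kd_1$), so $\Lambda=\Z\mathbf v_1\oplus\Lambda_0$. Comparing $\Lambda$ with the intermediate lattice $d_1\Z\times\Z^{n-1}$ (which equals $\Z\mathbf v_1\oplus(\{0\}\times\Z^{n-1})$) gives $D=d_1\cdot D_0$ with $D_0:=\#(\Z^{n-1}/\Lambda_0)$; in particular $d_1\le D$ and $D_0\le D$. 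Applying the inductive hypothesis to $\Lambda_0\subset\Z^{n-1}$ and padding by a zero first coordinate, I obtain a basis $\mathbf v_2,\dots,\mathbf v_n$ of $\Lambda_0$ in Hermite normal form: $\mathbf v_j$ has a pivot $\tilde d_j>0$ in coordinate $j$, zeros in coordinates $1,\dots,j-1$, all entries of absolute value $\le D_0\le D$, and $\prod_{j\ge 2}\tilde d_j=D_0$.

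It remains to shrink $\mathbf v_1$. Writing $\mathbf v_1=(d_1,w_2,\dots,w_n)$, I run Euclidean reduction: for $j=2,3,\dots,n$ in increasing order, replace $\mathbf v_1$ by $\mathbf v_1-\lfloor w_j/\tilde d_j\rfloor\mathbf v_j$, where $w_j$ denotes the current $j$-th entry. After step $j$ the $j$-th coordinate of $\mathbf v_1$ lies in $[0,\tilde d_j)$, and it is not disturbed by later steps since $\mathbf v_{j'}$ vanishes in coordinate $j$ for $j'>j$. This triangular bookkeeping — checking that earlier-reduced coordinates stay reduced — is the only point requiring care; everything else is routine. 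The outcome is still a basis of $\Lambda$ (we only added an integer combination of $\mathbf v_2,\dots,\mathbf v_n$ to $\mathbf v_1$), and now $\mathbf v_1$ has first entry $d_1\le D$ and all remaining entries in $[0,\tilde d_j)\subset[0,D)$, while each $\mathbf v_j$ with $j\ge 2$ has entries of absolute value $\le D_0\le D$. Hence every entry of $\mathbf v_1,\dots,\mathbf v_n$ has absolute value $\le D$. I would also note that, alternatively, the statement follows at once from the standard existence of the Hermite normal form together with $\prod(\text{pivots})=\#(\Z^n/\Lambda)$.
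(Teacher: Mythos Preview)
Your proof is correct and takes essentially the same approach as the paper's: both construct the Hermite normal form by induction on $n$, splitting off the first coordinate, handling the remaining $(n-1)$-dimensional sublattice recursively, and then reducing the first basis vector against the triangular structure so obtained. You phrase things in lattice language while the paper uses column operations on a matrix of basis vectors, and you are somewhat more explicit about why the final reduction step works (using the triangular pivots $\tilde d_j$), but the underlying argument is identical.
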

\begin{proof}
    Take an arbitrary basis of $\Lambda $ and write it as a set of column vectors.

    The assertion is translated to the following assertion on matrices:
    let $A\in M_{n\times n}(\Z )$ be an $n\times n$ matrix with positive determinant $D$.
    Then there is an invertible matrix $U\in \GL _n (\Z )$ such that $AU$ has entries whose absolute values are $\le D$.

    When $n=1$ (or $n=0$), the assertion is trivial.
    We proceed by induction on $n$.

    Let $d_1>0$ be the greatest common divisor of the entries in the first row of $A$.
    By Euclid's algorithm, which can be realized as the right multiplication by invertible matrices, the matrix $A$ can be transformed into the form 
    \begin{align}\label{eq:after-first-transformation}
        \mtx{d_1 &0\dots 0 \\ *&A'} ,\quad \text{ with }A'\in M_{(n-1)\times (n-1)}(\Z ).
    \end{align}
    This also shows $d_1\divides D$.
    Set $\det (A')=D' = D/d_1$.

    From the induction hypothesis, by the right multiplication of invertible matrices which leaves the first column and row unchanged, one can transform $A'$ so that it has entries whose absolute values are $\le D'$.

    The entries $*$ in \eqref{eq:after-first-transformation} might {\it a priori} have absolute values $>D$, but one can use the entries of the modified $A'$ to make them smaller than $D'$.
This completes the proof.
\end{proof}

\bibliographystyle{plain}
\bibliography{Mitsui_paper.bib}

\end{document}